\theoremstyle{plain}
\newtheorem{theorem}{Theorem}[section]
\newtheorem{proposition}[theorem]{Proposition}
\newtheorem{corollary}[theorem]{Corollary}
\theoremstyle{definition}
\newtheorem{definition}[theorem]{Definition}
\newtheorem*{example}{Example}
\newtheorem*{acks}{Acknowledgments}
\begin{document}
\title[IBN for $C^*$-Algebras]{Invariant Basis Number for $C^*$-Algebras}
\author{Philip M. Gipson}
\address{Department of Mathematics \\ State University of New York College at Cortland \\ Cortland, NY 13045-0900}
\email{philip.gipson@cortland.edu}
\subjclass[2000]{Primary 46L05, Secondary 46L08, 46L80, 16D70}
\keywords{$C^*$-algebras, Hilbert $C^*$-modules, Dimension, Leavitt Algebras}
\begin{abstract}
We develop the ring-theoretic notion of Invariant Basis Number
in the context of unital $C^*$-algebras and their Hilbert
$C^*$-modules. Characterization of $C^*$-algebras with Invariant
Basis Number is given in $K$-theoretic terms, closure properties of
the class of $C^*$-algebras with Invariant Basis Number are given,
and examples of $C^*$-algebras both with and without the property
are explored. For $C^*$-algebras without Invariant Basis Number we
determine structure in terms of a ``Basis Type" and describe a class
of $C^*$-algebras which are universal in an appropriate sense. We
conclude by investigating properties which are strictly stronger
than Invariant Basis Number. 
\end{abstract}
\maketitle
\begin{section}{Introduction}
Leavitt \cite{leavitt3,leavitt4} investigated unital rings $R$ with the property that any free module $X$ over $R$ has a fixed basis size. Rings with this property are said to have Invariant Basis Number and examples of such include commutative and Noetherian rings. Leavitt characterizes \cite[Corollary 1]{leavitt4} rings with Invariant Basis Number in the following manner: a ring $R$ has Invariant Basis Number if and only if there exists another ring $R'$ with Invariant Basis Number and a unital homomorphism $\phi:R\to R'$. For rings without Invariant Basis Number, Leavitt assigns \cite[Theorem 1]{leavitt4} a pair of positive integers he terms the ``module type" of the ring. Constructions \cite{leavitt1,leavitt3,leavitt4} of rings, termed Leavitt algebras $L_K(m,n)$, with arbitrary module type are given.

The fundamental structure of the Leavitt algebras has appeared in some surprising contexts. The algebra $L_K(1,n)$ given by Leavitt \cite[\S 3]{leavitt4} is the purely algebraic analogue of the Cuntz $C^*$-algebra $\mathcal{O}_n$ and pre-dates Cuntz's investigations. Indeed, the close connection between Leavitt algebras and Cuntz algebras inspired the formulation of Leavitt Path Algebras associated to graphs, which act as analogues to graph $C^*$-algebras. General Leavitt algebras $L_K(m,n)$ have been investigated by Ara and Goodearl \cite{ara} in the context of ``separated" Leavitt Path Algebras. Several $C^*$-algebraic versions of the Leavitt algebras $L_K(m,n)$ have been recently used in the work of Exel and Ara \cite{exel2,exel} related to dynamical systems. 

In this paper we will formulate the property of Invariant Basis Number in the context of $C^*$-algebras and their Hilbert $C^*$-modules. Using $K$-theoretic tools we are able to formulate an improved characterization of $C^*$-algebras with Invariant Basis Number in Theorem \ref{ibnchar}. We reproduce in Theorem \ref{basistype} Leavitt's type-classification for $C^*$-algebras without Invariant Basis Number and prove in Theorem \ref{typeexists} that each Basis Type is possible for some $C^*$-algebra. In Section \ref{univ} we determine that the $C^*$-algebras $U_{m,n}^{nc}$ studied by McCLanahan \cite{mcclanahan} are universal objects for $C^*$-algebras without Invariant Basis Number and, as such, are the correct analogue of the Leavitt algebras $L_K(m,n)$. Finally we will investigate several stronger variations of Invariant Basis Number as proposed in the purely algebraic case by Cohn \cite{cohn}.

\end{section}
\begin{section}{$C^*$-Module Preliminaries}
We will always assume our $C^*$-algebras to be unital and denote the unit by by $1$ or $1_A$. A \textit{$C^*$-module} $X$ over a $C^*$-algebra $A$ (more briefly, an \textit{$A$-module}) is a complex vector space which is a right $A$-module and is equipped with an $A$-valued inner-product $\langle\cdot,\cdot\rangle:X\times X\to A$ which is $A$-linear in the second coordinate and $A$-adjoint-linear in the first coordinate. If $X$ is complete with respect to the norm $||x||=||\langle x,x\rangle||_A^{\frac12}$ then it is termed a \textit{Hilbert $A$-module}. We will use Wegge-Olsen \cite[Chapter 15]{wegge} as a reference for basic Hilbert $C^*$-module results.

The space of adjointable $A$-module homomorphisms between two $A$-modules $X$ and $Y$ will be denoted $L(X,Y)$. An adjointable homomorphism $\phi$ is \textit{unitary} if it is bijective and isometric, i.e.\ $\langle x,x'\rangle_X=\langle \phi(x),\phi(x')\rangle _Y$ for all $x,x'\in X$. We will say that $X$ and $Y$ are \textit{unitarily equivalent}, and write $X\simeq Y$, if there exists a unitary in $L(X,Y)$.

An $A$-module $X$ is \textit{algebraically finitely generated} if there exist $x_1,...,x_n\in X$ such that $X=\operatorname{span}_A(x_1,...,x_n)$. We will never consider the weaker notion of topological finite generation, and so will omit the term ``algebraically" in the remainder. An $A$-module $X$ is \textit{projective} if it is a direct summand of a free $A$-module. It is a known result (\cite[Theorem 15.4.2]{wegge} for example) that a finitely generated projective $A$-module is isomorphic (as an $A$-module) to a Hilbert $A$-module. Further, the finitely generated projective Hilbert $A$-modules are all of the form $pA^n$ for some $n\geq 1$ and some matrix projection $p\in M_n(A)$.

We will denote the set of projections in $M_n(A)$ by $P_n(A)$. For $p\in P_n(A)$ and $q\in P_m(A)$ we will set $p\oplus q=diag(p,q)\in P_{n+m}(A)$. We will say $p$ and $q$ are \emph{stably equivalent} if there is a matrix projection $r$ for which $p\oplus r\sim q\oplus r$, where ``$\sim$" denotes (Murray-von Neumann) equivalence in $P_\infty(A)=\bigcup_{n=1}^\infty P_n(A)$. The stable equivalence class of $p$ will be denoted $[p]_0$ and considered as an element of the group $K_0(A)$. The (additive) order of an element $[p]_0\in K_0(A)$ will be denoted $|[p]_0|_{K_0(A)}$ or $|[p]_0|$ if the $C^*$-algebra $A$ is clear from context.

\end{section}

\begin{section}{Invariant Basis Number}\label{IBN}

Let $A$ be a unital $C^*$-algebra. The \emph{finitely generated free $A$-module of rank $n$} is $A^n:=A\oplus...\oplus A$ where there are $n$ summands. The action of $A$ on $A^n$ is coordinate-wise multiplication on the right and the inner-product is given by $\langle (a_1,...,a_n),(b_1,...,b_n)\rangle=a_1^*b_1+...+a_n^*b_n$. Although we write them as tuples, i.e.\ row vectors, it is often beneficial to view elements of $A^n$ instead as column vectors. The coordinate projections  $\pi_i:A^n\to A$  defined by $\pi_i(a_1,...,a_n)=a_i$ are  bounded, contractive, adjointable $A$-module homomorphisms. Therefore a Cauchy sequence in $A^n$ is Cauchy in each coordinate and hence, as $A$ itself is complete, converges in each coordinate. Thus $A^n$ is a complete (i.e.\ Hilbert) $A$-module. In keeping with the literature, free Hilbert $A$-modules will henceforth be referred to as \emph{standard $A$-modules}, where the completeness is understood.

The fundamental question we will consider is this: under what conditions are the standard modules distinct from one another? We will make this notion of distinctness precise with the next definition.

\begin{definition} A $C^*$-algebra $A$ has \emph{Invariant Basis Number} (hereafter, has IBN) if
$$A^n\simeq A^m\Leftrightarrow n=m.$$
\end{definition}

Unitary equivalence is, in general, a stronger condition than $A$-module isomorphism. In fact, unitaries are precisely the \emph{isometric} $A$-module isomorphisms. However, in the case of standard modules every $A$-module homomorphism $\phi:A^n\to A^m$ may be represented as a $m\times n$ matrix with elements in $A$ and so is automatically adjointable. Therefore if $\phi:A^n\to A^m$ is an $A$-module isomorphism then the Polar Decomposition \cite[Theorem 15.3.7]{wegge} yields a unitary in $L(A^n,A^m)$. We have formulated the definition in terms of unitary equivalence, rather than module isomorphism, to emphasize the Hilbert structure of the standard modules.

A matrix $U\in M_{m,n}(A)$ will be termed a \emph{unitary} if $UU^*=I_n$ and $U^*U=I_m$. As noted above, we may identify $L(A^n,A^m)$ with $M_{m,n}(A)$ and a unitary homomorphism in $L(A^n,A^m)$ corresponds to a unitary matrix in $M_{m,n}(A)$. The definition of Invariant Basis Number may thus be rephrased as follows: $A$ has IBN if and only if every unitary matrix over $A$ is square.

\begin{example} It is not hard to verify that a matrix with entries in an commutative algebra is invertible if and only if it is square. Hence commutative $C^*$-algebras have Invariant Basis Number.
\end{example}

The connection between matrices and Invariant Basis Number gives our first main result.

\begin{theorem}\label{ibnchar} A $C^*$-algebra $A$ has \textup{IBN} if and only if the group element $[1_A]_0\in K_0(A)$ has infinite order.
\end{theorem}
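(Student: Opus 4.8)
The plan is to translate the question entirely into the arithmetic of $[1_A]_0$ in $K_0(A)$, using the dictionary between standard modules and matrix projections recorded in Section~2: a unitary $U\in M_{m,n}(A)$ implementing $A^n\simeq A^m$ gives, after padding with zeros, a partial isometry in $M_{n+m}(A)$ (namely the matrix with $U$ in the lower-left block and zeros elsewhere, whose initial and final projections are $I_n\oplus 0_m$ and $0_n\oplus I_m$), so that $I_n\sim I_m$ in $P_\infty(A)$; and conversely a Murray--von Neumann equivalence $p\sim q$ of projections restricts to a unitary equivalence $pA^\ell\simeq qA^\ell$ of the associated finitely generated projective Hilbert modules (\cite[Chapter 15]{wegge}). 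Throughout we use $[I_k]_0=k[1_A]_0$ in $K_0(A)$.

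\emph{IBN implies $[1_A]_0$ has infinite order (by contraposition).} Suppose $A$ fails IBN, so $A^n\simeq A^m$ for some $n>m$. By the remark above, $I_n\sim I_m$ in $P_\infty(A)$, hence $n[1_A]_0=[I_n]_0=[I_m]_0=m[1_A]_0$ in $K_0(A)$, so $(n-m)[1_A]_0=0$ with $n-m\geq 1$. Thus $[1_A]_0$ has finite order.

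\emph{$[1_A]_0$ of finite order implies $A$ fails IBN.} Suppose $[1_A]_0$ has finite order $k\geq 1$, so that $[I_k]_0=k[1_A]_0=0=[0]_0$ in $K_0(A)$. By the definition of the Grothendieck group, equality of classes in $K_0(A)$ means stable equivalence of the representing projections: there is a projection $r\in P_N(A)$ (some $N$) with $I_k\oplus r\sim 0_k\oplus r$ in $P_{k+N}(A)$. Realizing this equivalence by a partial isometry and passing to ranges gives a unitary equivalence $A^k\oplus rA^N\simeq rA^N$ of Hilbert $A$-modules. Adjoining the complementary module $(I_N-r)A^N$ to both sides and using $rA^N\oplus(I_N-r)A^N\simeq A^N$, we obtain $A^{k+N}\simeq A^N$. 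Since $k\geq 1$, this shows $A$ does not have IBN, completing the proof.

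The genuinely delicate point is the second direction: torsion of $[1_A]_0$ only produces an equivalence of projections \emph{after stabilizing} by an auxiliary $r$, and one must absorb both $r$ and its complement $I_N-r$ — which is where unitality of $A$ (so that $I_N-r$ makes sense and $rA^N\oplus(I_N-r)A^N$ is free) enters — in order to upgrade this to an honest unitary equivalence of \emph{free} modules $A^{k+N}\simeq A^N$. The remaining ingredients (every homomorphism between standard modules is a matrix and hence adjointable; partial isometries induce unitary equivalences of ranges; the structure of finitely generated projective Hilbert modules as $pA^n$) are all standard and are cited from Section~2 and \cite{wegge}.
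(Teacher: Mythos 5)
Your proof is correct and follows essentially the same route as the paper's: the forward direction extracts $(n-m)[1_A]_0=0$ from the rectangular unitary witnessing $A^n\simeq A^m$, and the converse absorbs the auxiliary projection $r$ together with its complement $I_N-r$ to upgrade the stable equivalence $I_k\oplus r\sim r$ to $I_{N+k}\sim I_N$, hence $A^{N+k}\simeq A^N$. The only cosmetic difference is that you phrase the absorption step at the level of modules ($A^k\oplus rA^N\simeq rA^N$, then adjoin $(I_N-r)A^N$) while the paper phrases it at the level of projections; these are the same argument under the standard dictionary.
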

\begin{proof} If $A$ does not have IBN then $A^n\simeq A^m$ for some $n>m>0$ and hence there is a unitary matrix in $M_{m,n}(A)$. This unitary implements the (Murray-von Neumann) matrix equivalence of the projections $I_m$ and $I_n$ and consequently we have $$I_{n-m}\oplus I_m\sim I_n\sim I_m\sim0\oplus I_m.$$
Thus $I_{n-m}$ is stably equivalent to $0$, i.e.\ $(n-m)[1_A]_0=[I_{n-m}]_0=0$, and so $[1_A]_0$ has finite order.

Conversely, if $[1_A]_0$ has finite order $k$ then $I_k$ is stably equivalent to $0$, i.e.\ there exists $N>0$ and $p\in P_N(A)$ such that $p\oplus I_k\sim p\oplus 0\sim p$. As $I_N\sim p\oplus (I_N-p)$ we have
$$I_N\oplus I_k\sim(I_N-p)\oplus p\oplus I_k\sim (I_N-p)\oplus p\sim I_N$$
and so $I_{N+k}\sim I_N$.
The matrix implementing this equivalence is unitary and thus corresponds to a unitary homomorphism from $A^N$ to $A^{N+k}$. Since $k>0$ we must conclude that $A$ does not have IBN.
\end{proof}

It is hinted in the above proof that when a $C^*$-algebra does not have IBN the order of $[1_A]_0$ yields information about equivalence of standard modules. We shall make this connection clear in Section \ref{nonIBN} when we turn our attention fully to $C^*$-algebras without IBN.

The $K$-theoretic description of IBN immediately expands the class of $C^*$-algebras with that property beyond the commutative. In particular, it is well-known (see \cite{rordambook}, for example) that stably-finite $C^*$-algebras, i.e.\ those without any proper matrix isometries, have a totally ordered $K_0$ group. Further, in this case the element $[1_A]_0$ is an \emph{order unit} for $K_0$ in the sense that for any $g\in K_0$ there is a positive integer $k$ for which $-k[1_A]_0<g<k[1_A]$. It follows that $[1_A]_0$ cannot have a finite order and, applying Theorem \ref{ibnchar}, we conclude that a stably-finite $C^*$-algebra must have IBN. We would like to remark that this could also be inferred from the matricial description of IBN, as any rectangular unitary could be ``cut down" to a square proper isometry.

The funtorial properties of $K_0$ also yield the following result which will be used extensively to demonstrate closure properties for the class of $C^*$-algebras with IBN.

\begin{proposition}\label{prophomo} A $C^*$-algebra $A$ has IBN if and only if there exists a $C^*$-algebra $B$ which has IBN and a unital $*$-homomorphism $\phi:A\to B$.
\end{proposition}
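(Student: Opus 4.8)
The plan is to leverage Theorem \ref{ibnchar}, which reduces IBN to the statement that $[1_A]_0$ has infinite order in $K_0(A)$, together with the functoriality of $K_0$. One direction is immediate: if $A$ has IBN, take $B = A$ and $\phi = \mathrm{id}_A$, which is plainly a unital $*$-homomorphism to a $C^*$-algebra with IBN.

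For the substantive direction, suppose there is a unital $*$-homomorphism $\phi: A \to B$ with $B$ having IBN. I would argue by contrapositive: assume $A$ does \emph{not} have IBN and derive that $B$ does not have IBN either. By Theorem \ref{ibnchar}, the failure of IBN for $A$ means $[1_A]_0 \in K_0(A)$ has finite order, say $k[1_A]_0 = 0$ for some $k > 0$. Now apply the induced group homomorphism $\phi_* = K_0(\phi): K_0(A) \to K_0(B)$. Since $\phi$ is unital, $\phi(1_A) = 1_B$, and hence $\phi_*([1_A]_0) = [1_B]_0$. Applying $\phi_*$ to the relation $k[1_A]_0 = 0$ and using additivity of $\phi_*$ gives $k[1_B]_0 = \phi_*(k[1_A]_0) = \phi_*(0) = 0$. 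Thus $[1_B]_0$ has finite order (dividing $k$), so by Theorem \ref{ibnchar} again, $B$ does not have IBN, contradicting our hypothesis. Therefore $A$ must have IBN.

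I do not anticipate a genuine obstacle here — the proof is essentially a one-line diagram chase once Theorem \ref{ibnchar} is in hand. The only point requiring any care is the verification that a unital $*$-homomorphism sends $[1_A]_0$ to $[1_B]_0$ under $\phi_*$; this follows because $\phi_*$ is defined on classes of projections by applying $\phi$ entrywise to matrix projections, and the $1 \times 1$ projection $1_A$ maps to $1_B$. It is worth noting explicitly that this result is the $C^*$-algebraic analogue of Leavitt's \cite[Corollary 1]{leavitt4} characterization mentioned in the introduction, and that unitality of $\phi$ is essential: a non-unital homomorphism need not preserve the class of the identity, and indeed the conclusion can fail without it.
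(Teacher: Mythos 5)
Your proof is correct and follows essentially the same route as the paper's: both directions use Theorem \ref{ibnchar} together with the functoriality of $K_0$ and the fact that a unital $*$-homomorphism induces a map sending $[1_A]_0$ to $[1_B]_0$; your contrapositive phrasing is just the paper's ``if $[1_B]_0$ has infinite order then so does its preimage $[1_A]_0$'' read in the other direction.
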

\begin{proof} Necessity is easily satisfied by letting $B=A$ and $\phi=id_A$.

To show sufficiency,  we note that the functorial properties of $K_0$ induce a group homomorphism $K_0(\phi):K_0(A)\to K_0(B)$. Since $\phi$ is unital we have $K_0(\phi)[1_A]_0=[1_B]_0$. If $B$ has IBN then $[1_B]_0$ has infinite order in $K_0(B)$ and so its preimage $[1_A]_0$ must have infinite order in $K_0(A)$. Thus $A$ has IBN.
\end{proof}

The above statement mirrors the purely algebraic characterization of rings with IBN given by Leavitt \cite[Corollary 1]{leavitt4}.

The proposition has immediate consequences for the closure properties of the class of $C^*$-algebras with Invariant Basis Number.

\begin{corollary}\label{closures} \textup{IBN} is preserved under direct sums and unital extensions.
\end{corollary}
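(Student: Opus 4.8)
The plan is to deduce both closure properties directly from Proposition \ref{prophomo}: in each case the algebra in question carries a canonical unital $*$-homomorphism onto a $C^*$-algebra already known to have IBN, and that is precisely what Proposition \ref{prophomo} demands.

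First I would treat direct sums. Suppose $A$ and $B$ have IBN. One checks that $A\oplus B$ is a unital $C^*$-algebra with unit $(1_A,1_B)$, so that IBN is meaningful for it, and that the coordinate projection $\pi_A\colon A\oplus B\to A$, $\pi_A(a,b)=a$, is a $*$-homomorphism carrying $(1_A,1_B)$ to $1_A$, hence unital. Since $A$ has IBN, Proposition \ref{prophomo} applied to $\pi_A$ gives that $A\oplus B$ has IBN; a finite direct sum is handled by the same argument applied to any one coordinate projection, or by induction. I would also note that this argument in fact shows $A\oplus B$ has IBN as soon as one of the two summands does, in accordance with the fact that the order of $[1_{A\oplus B}]_0=([1_A]_0,[1_B]_0)$ under $K_0(A\oplus B)\cong K_0(A)\oplus K_0(B)$ is $\operatorname{lcm}(|[1_A]_0|,|[1_B]_0|)$, via Theorem \ref{ibnchar}.

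Next, unital extensions. For a short exact sequence $0\to J\to A\xrightarrow{\pi}B\to 0$ in which $A$ is unital, the quotient $B\cong A/J$ is unital and $\pi$ is a unital $*$-homomorphism; so if $B$ has IBN, then Proposition \ref{prophomo} applied to $\pi$ shows that $A$ has IBN.

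I do not anticipate a genuine obstacle: the content is carried entirely by Proposition \ref{prophomo}, and the only items requiring (routine) verification are that $A\oplus B$ is unital and that the coordinate projection, respectively the quotient map, preserves the unit. The one modeling decision to settle in advance is the intended meaning of ``unital extension''; I read it as an extension whose middle term — equivalently, whose quotient — is unital, this being the setting in which IBN is defined, after which the proof reduces to the two applications of Proposition \ref{prophomo} described above.
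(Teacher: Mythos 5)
Your proof is correct and follows essentially the same route as the paper: both closure properties are deduced from Proposition \ref{prophomo} via the coordinate projection (which only requires one summand to have IBN) and the quotient map of the extension, respectively, with the same reading of ``unital extension.''
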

\begin{proof}
Suppose that $A$ is a $C^*$-algebra with IBN. If $B$ is a unital $C^*$-algebra then the coordinate map $a\oplus b\mapsto a$ is a unital $*$-homomorphism and thus $A\oplus B$ has IBN.

If $B$ is any unital extension of $A$ then there exists a $C^*$-algebra $C$ and a short exact sequence
$$0\rightarrow C\rightarrow B\xrightarrow{\phi} A\rightarrow 0.$$
Of course $\phi$ is a surjective $*$-homomorphism, hence is unital, and thus $B$ has IBN.
\end{proof}

Note that a direct sum inherits IBN even if only one of the summands has that property. We conclude our discussion of $C^*$-algebras with IBN by leveraging the results to find non-commutative, non-stably-finite $C^*$-algebras which have IBN.

\begin{example} Consider the Cuntz algebra $\mathcal{O}_\infty$, the universal $C^*$-algebra generated by a countable family of isometries with pairwise disjoint ranges. Since $\mathcal{O}_\infty$ contains proper isometries it is certainly neither commutative nor (stably) finite. However, it is a classical result of Cuntz \cite[Corollary 3.11]{cuntzk} that $K_0(\mathcal{O}_\infty)=\mathbb{Z}$ and is generated by $[1]_0$. Thus by, Theorem \ref{ibnchar}, $\mathcal{O}_\infty$ has IBN.\end{example}

\begin{example} On the opposite end of the spectrum, consider the Toeplitz algebra $\mathcal{T}$, the universal $C^*$-algebra generated by a single non-unitary isometry. Of course $\mathcal{T}$ is neither commutative nor (stably) finite but is well known to be an extension of the commutative $C^*$-algebra $C(\mathbb{T})$ by the compact operators $\mathcal{K}$. Thus by Corollary \ref{closures} $\mathcal{T}$ has IBN.\end{example}

\begin{subsection}{A remark on the non-unital case.}
It is a perfectly legitimate criticism that we are dealing solely with unital $C^*$-algebras. Let us briefly describe why we wish to avoid the nonunital case. 

Suppose that $A$ is a nonunital $C^*$-algebra. Unlike in the unital case, the adjointable $A$-module homomorphisms in $L(A^n,A^m)$ are not identified with $M_{m,n}(A)$, but rather with $m\times n$ matrices over the \emph{multiplier algebra} of $A$, which we'll denote by $\mathcal{M}(A)$. Of course $\mathcal{M}(A)$ is, practically by definition, unital. The unitary equivalence $A^n\simeq A^m$ thus implies the existence of a unitary matrix in $M_{m,n}(\mathcal{M}(A))$ and so $\mathcal{M}(A)^n\simeq\mathcal{M}(A)^m$. It is not hard to see that the logic is reversible and so $A^n\simeq A^m$ if and only if $\mathcal{M}(A)^n\simeq\mathcal{M}(A)^m$. 

As a consequence of the above reasoning, we see that the statement ``$A^n\simeq A^m$ if and only if $n=m$" is equivalent to ``$\mathcal{M}(A)$ has IBN." This is what we believe should be the working definition of IBN for nonunital $C^*$-algebras. In fact, since $\mathcal{M}(A)=A$ when $A$ is unital, it agrees with our unital definition.

Unfortunately, we do not feel this definition to be particularly useful. First, many nice properties of a $C^*$-algebra are not preserved in it's multiplier algebra. Seperability being a prime example. Second, we do not know of a method, outside a very few special cases, to detect information about $K_0(\mathcal{M}(A))$ based on information about $A$. Since our main tools are $K$-theoretic this is a major stumbling block.

\end{subsection}
\end{section}

\begin{section}{$C^*$-Algebras Without Invariant Basis Number}\label{nonIBN}
We now turn our attention to those unital $C^*$-algebras which lack the Invariant Basis Number property. By Theorem \ref{ibnchar}, we may conclude that $C^*$-algebras $A$ without IBN are characterized by having a finite order for the element $[1_A]_0\in K_0(A)$. A particularly tractable case is when $[1_A]_0$ has order 1, i.e.\ is the zero element of $K_0(A)$.
\begin{example} When $H$ is an infinite dimensional Hilbert space $B(H)$ does not have IBN because $K_0(B(H))=\{0\}$. \end{example}

\begin{example} The Cuntz algebra $\mathcal{O}_2$ is the universal $C^*$-algebra generated by two isometries $v_1$ and $v_2$ satisfying $v_1v_1^*+v_2v_2^*=1$ and $v_1^*v_2=v_2^*v_1=0$. A result of Cuntz \cite[Theorem 3.7]{cuntzk} is that $K_0(\mathcal{O}_2)=\{0\}$ and so $\mathcal{O}_2$ does not have IBN. In fact we can concretely see the equivalence $\mathcal{O}_2\simeq\mathcal{O}_2^2$ via the map $(a,b)\mapsto v_1a+v_2b$ which extends to a unitary homomorphism and corresponds to the $1\times 2$ unitary matrix $[v_1\ v_2]$. \end{example}

\begin{example} For a slightly less trivial example, consider the Cuntz algebra $\mathcal{O}_3$. We have that $K_0(\mathcal{O}_3)=\mathbb{Z}/2\mathbb{Z}$ and is in fact generated by $[1]_0$. Thus $\mathcal{O}_3$ does not have IBN. Much like for $\mathcal{O}_2$ we can in fact write down a $1\times 3$ unitary matrix $[v_1\ v_2\ v_3]$ which gives the unitary equivalence $\mathcal{O}_3\simeq\mathcal{O}_3^3$. Of course in general we have $K_0(\mathcal{O}_n)=\mathbb{Z}/(n-1)\mathbb{Z}$ and so no Cuntz algebra has IBN.\end{example}

Recalling the definition of Invariant Basis Number, a $C^*$-algebra lacks IBN precisely when two or more standard modules with differing ranks are equivalent. The restrictions on when such equivalence may occur give some structural information for $C^*$-algebras without IBN. The precise nature of that information is contained in our next main result.
\begin{theorem}\label{basistype} If $A$ is a $C^*$-algebra without \textup{IBN} then there exists a unique largest positive integer $N$ and a unique smallest positive integer $K$ satisfying:
\begin{enumerate}
\item if $n,m\geq 1$, $n<N$, and $A^n\simeq A^m$ then $n=m$, and
\item if $n,m\geq 1$ and $A^n\simeq A^m$ then $(n-m)\equiv 0\mod K$.
\end{enumerate}
\end{theorem}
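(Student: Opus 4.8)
The plan is to run everything through the $K$-theory of Theorem~\ref{ibnchar} together with the arithmetic of Murray--von Neumann equivalence, keeping in mind that the latter is \emph{not} cancellative. I will use freely the identification of $A^n\simeq A^m$ with $I_n\sim I_m$ in $P_\infty(A)$ from the matrix picture of $L(A^n,A^m)$, and three elementary stability properties of the relation ``$A^n\simeq A^m$'': symmetry (invert the unitary), transitivity (compose unitaries), and closure under direct sums, so that $A^{n}\simeq A^{m}$ and $A^{n'}\simeq A^{m'}$ imply $A^{n+n'}\simeq A^{m+m'}$, and in particular $A^{n}\simeq A^{m}\Rightarrow A^{n+l}\simeq A^{m+l}$ for all $l\ge 0$. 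Since $A$ lacks IBN, Theorem~\ref{ibnchar} makes $e:=|[1_A]_0|_{K_0(A)}$ finite, and functoriality of $K_0$ shows that any equivalence $A^n\simeq A^m$ yields $(n-m)[1_A]_0=0$, hence $e\mid(n-m)$.

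For $N$: set $N$ to be the least $n\ge 1$ that occurs in a nontrivial equivalence $A^n\simeq A^m$, $m\ne n$; this exists exactly because $A$ lacks IBN. Minimality forces the partner to satisfy $m>N$, condition~(1) holds for $N$ (again by minimality), and~(1) fails for any larger integer (take $n=N$). Thus $N$ is the unique largest integer with property~(1).

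For $K$: consider $T=\{\,t\ge0: A^N\simeq A^{N+t}\,\}$. Direct sums make $T$ a submonoid of $\mathbb{Z}_{\ge0}$, and I put $K=\min(T\setminus\{0\})$, so that $A^N\simeq A^{N+K}$ and $K$ is the smallest positive period occurring at rank $N$. The crucial step is to show $T=K\mathbb{Z}_{\ge0}$. Given $t\in T$, write $t=qK+r$ with $0\le r<K$; the equivalence $A^N\simeq A^{N+K}$ plus direct sums gives $A^{j}\simeq A^{j+K}$ for every $j\ge N$, and feeding this relation ``downward'' $q$ times into $A^{N+t}$ --- every intermediate rank remaining $\ge N+K$ because $r\ge0$ --- produces $A^N\simeq A^{N+r}$, so $r=0$ by minimality of $K$. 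Next I show $K=e$. One inequality is immediate: $A^N\simeq A^{N+K}$ forces $K[1_A]_0=0$, so $e\mid K$. For the other, the construction in the proof of Theorem~\ref{ibnchar} exhibits an equivalence $A^{p+e}\simeq A^{p}$ for some $p\ge 1$; inflating $A^N\simeq A^{N+K\ell}$ to a rank $\ge p$ and then appending the difference $e$ places $K\ell+e$ in $T=K\mathbb{Z}_{\ge0}$, whence $K\mid e$. Therefore $K=e$, and condition~(2) follows at once from $e\mid(n-m)$; uniqueness of $K$ is built into its description as the smallest positive period at rank $N$ (equivalently, the order of $[1_A]_0$).

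The step I expect to be the main obstacle is $T=K\mathbb{Z}_{\ge0}$. Because Murray--von Neumann equivalence of projections does not cancel, one cannot simply delete a copy of $A^N$ from $A^N\simeq A^{N+t}$; the argument has to be carried out at large enough rank that the only moves invoked are the genuinely available ``period-$K$'' equivalences, and the bookkeeping that keeps each intermediate rank $\ge N+K$ is exactly the delicate point. The same failure of cancellation is why the inequality $K\mid e$ must be obtained by inflating rather than by reducing.
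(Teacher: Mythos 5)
Your proposal is correct, and it diverges from the paper's argument in one substantive place. The treatment of $N$ is identical, as is the definition of $K$ as the least positive jump at rank $N$ and the key absorption identity $A^{j+K}\simeq A^{j}$ for $j\ge N$ used to push ranks downward (your computation establishing $T=K\mathbb{Z}_{\ge0}$ is exactly the paper's iterated reduction, and your bookkeeping of the intermediate ranks is sound). Where you part ways is condition (2). The paper stays entirely at the module level: it reduces \emph{both} sides of an arbitrary equivalence $A^n\simeq A^m$ into the window $[N,N+K)$ and then, supposing the reduced ranks $n<m$ differ, computes $A^N\simeq A^{N+K}\simeq A^{N+K-m}\oplus A^m\simeq A^{N+K-m}\oplus A^n\simeq A^{N+K-(m-n)}$, contradicting the minimality of $K$. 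You instead prove $K=\bigl|[1_A]_0\bigr|_{K_0(A)}$ --- one divisibility from $A^N\simeq A^{N+K}$, the other by inflating $A^N\simeq A^{N+K\ell}$ past the rank at which the proof of Theorem \ref{ibnchar} supplies a period-$e$ equivalence --- and then deduce (2) from functoriality of $K_0$. Both routes correctly respect the failure of cancellation by only ever splitting off summands at sufficiently large rank. What your route buys is that it absorbs Proposition \ref{kisk} into the theorem (the paper proves that statement separately immediately afterwards, using essentially your two divisibility arguments); what the paper's route buys is a proof of (2) that never leaves the category of modules and that directly exhibits the $N+K$ equivalence classes of standard modules described after the theorem. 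One cosmetic caution: your phrase ``every intermediate rank remaining $\ge N+K$'' should really be ``every rank at which the absorption identity is applied remains $\ge N$,'' which is what the argument actually requires and what your inequalities deliver.
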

This result is comparable to \cite[Theorem 1]{leavitt4}. The first condition characterizes $N$ as the least rank for which distinctness of the standard $A$-modules fails: all standard $A$-modules of rank less than $N$ are distinct. The second condition characterizes $K$ as the minimum ``jump" in rank possible between equivalent standard $A$-modules.
\begin{definition} If $A$ is a $C^*$-algebra without IBN then the pair $(N,K)$ given by Theorem \ref{basistype} is the \emph{Basis Type} of $A$. For notational purposes we may write $type(A)=(N,K)$ or $(N_A,K_A)$.
\end{definition}
\begin{proof}[Proof of Theorem \ref{basistype}.] Since $A$ does not have IBN there are at least two distinct ranks $n,m$ for which $A^n\simeq A^m$. In particular, the set $E:=\{j\geq0:\exists k\not=j\ s.t.\ A^j\simeq A^k\}$ is nonempty and so $N:=\min\{n:n\in E\}$ is well defined. If $n<N$ then $n\not\in E$ and so $A^n\simeq A^m$ only if $m=n$. So our choice of $N$ satisfies the first condition. That our $N$ is the largest possible is immediate, since if $N'>N$ then there is at least one rank ($N$ itself) less than $N'$ for which the first condition does not hold.

Let $N$ be as above and define $K=\min\{k>0:A^N\simeq A^{N+k}\}$, which exists by our choice of $N$. Note that for any $n\geq N+K$ we have
$$A^n=A^{n-N-K+N+K}\simeq A^{n-N-K}\oplus A^{N+K}\simeq A^{n-N-K}\oplus A^N\simeq A^{n-K}.$$
Through iteration of this process we obtain an integer $n'$ satisfying $N\leq n'<N+K$, $n'\equiv n\mod K$, and $A^{n'}\simeq A^n$. Because of this, it is enough to check a simpler version of the second condition: if $A^n\simeq A^m$ for $N\leq n,m<N+K$ then $n=m$. (Note this will guarantee the minimality of $K$.)
Suppose that $n,m$ are two ranks satisfying the simplified hypothesis but with $m>n$. Then
$$A^N\simeq A^{N+K}\simeq A^{N+K-m}\oplus A^m\simeq A^{N+K-m}\oplus A^n\simeq A^{N+K-(m-n)}$$
and, as $K-(m-n)<K$, we have contradicted the minimality of $K$.
\end{proof}

The Basis Type of a $C^*$-algebra determines the equivalences of standard modules. In particular, if $type(A)=(N,K)$ then there are precisely $N+K$ unitary equivalence classes of standard modules: the distinct ones of rank less than $N$ and the $K$ classes for ranks $N,\ N+1, ...,\ N+K-1$.

\begin{example} Revisiting the examples from the beginning of the section, we find that $B(H)$ and $\mathcal{O}_2$ both have Basis Type $(1,1)$. The Cuntz algebra $\mathcal{O}_3$ is of Basis Type $(1,2)$ since (as may be checked) $\mathcal{O}_3\not\simeq \mathcal{O}_3^2$ but $\mathcal{O}_3\simeq\mathcal{O}_3^3$.\end{example}
Recalling that $K_0(\mathcal{O}_2)=K_0(B(H))=0$ while $K_0(\mathcal{O}_3)=\mathbb{Z}/2\mathbb{Z}$ the following proposition is perhaps unsurprising.

\begin{proposition}\label{kisk} If $A$ is a $C^*$-algebra with Basis Type $(N,K)$ then the order of $[1_A]_0$ in $K_0(A)$ is equal to $K$.
\end{proposition}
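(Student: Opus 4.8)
The plan is to prove that the order $k:=|[1_A]_0|$ of $[1_A]_0$ in $K_0(A)$ and the integer $K$ divide each other, so that $k=K$. Both quantities are positive: $k$ is finite by Theorem \ref{ibnchar}, since $A$ lacks IBN, and $K\geq 1$ by the construction in Theorem \ref{basistype}, which in particular records the equivalence $A^N\simeq A^{N+K}$.

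For $k\mid K$, I would use exactly this equivalence $A^N\simeq A^{N+K}$. As explained in the proof of Theorem \ref{ibnchar}, a unitary homomorphism $A^N\to A^{N+K}$ implements a Murray--von Neumann equivalence $I_N\sim I_{N+K}$ in $P_\infty(A)$, and $I_{N+K}=I_N\oplus I_K$. Passing to $K_0(A)$ gives $N[1_A]_0=N[1_A]_0+K[1_A]_0$, hence $K[1_A]_0=0$, and therefore $k\mid K$ by definition of the additive order.

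For $K\mid k$, I would run the second half of the proof of Theorem \ref{ibnchar} in reverse: from $k[1_A]_0=0$ we get that $I_k$ is stably equivalent to $0$, which (as in that proof) produces an integer $N_0\geq 1$ together with a unitary equivalence $A^{N_0}\simeq A^{N_0+k}$. Applying condition (2) of Theorem \ref{basistype} to the ranks $N_0$ and $N_0+k$ yields $k\equiv 0\mod K$, i.e.\ $K\mid k$. Combining the two divisibilities completes the proof.

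I do not anticipate a genuine obstacle here; the statement is essentially immediate once Theorems \ref{ibnchar} and \ref{basistype} are available. The only point to watch is that the integer $N_0$ extracted from the proof of Theorem \ref{ibnchar} need not coincide with the distinguished integer $N$ of the Basis Type, but this is harmless because condition (2) of Theorem \ref{basistype} constrains the rank difference for \emph{every} equivalent pair of standard modules, not merely those of rank at least $N$.
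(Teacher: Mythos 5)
Your proposal is correct and follows essentially the same route as the paper's proof: the equivalence $A^N\simeq A^{N+K}$ gives $K[1_A]_0=0$ hence one divisibility, and re-running the second half of the proof of Theorem \ref{ibnchar} produces some $A^{M}\simeq A^{M+k}$ from which condition (2) of Theorem \ref{basistype} gives the other. Your explicit remark that the rank $N_0$ produced there need not equal $N$, and that condition (2) applies to all equivalent pairs, is a point the paper glosses over slightly (it just says ``by definition of $K$'') but the argument is the same.
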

\begin{proof}
Since $A$ does not have IBN the element $[1_A]_0$ must have some finite order $J$.  Since $A^{N}\simeq A^{N+K}$ by definition of the Basis Type we conclude that $I_N$ and $I_{N+K}$ are (Murray-von Neumann) equivalent matrix projections; consequently we have $K[1_A]_0=[I_K]_0=0$ in $K_0(A)$ and thus $K\equiv 0\mod J$. Re-examination of the proof for Theorem \ref{ibnchar} yields that as $J[1_A]_0=0$ there exists some $M$ such that $I_{M+J}\sim I_M$, i.e.\ $A^{M}\simeq A^{M+J}$. Thus, by definition of $K$, we have $J\equiv 0\mod K$. We must then conclude that $J=K$, as desired.
\end{proof}

Following Leavitt \cite[\S2]{leavitt4}, we will give the Basis Types a lattice structure as follows:
$$(N_1,K_1)\leq (N_2,K_2)\Leftrightarrow N_1\leq N_2\ and\ K_2\equiv 0\mod K_1,$$
$$(N_1,K_1)\vee (N_2,K_2)=(\max(N_1,N_2),\operatorname{lcm}(K_1,K_2)),$$
$$(N_1,K_1)\wedge (N_2,K_2)=(\min(N_1,N_2),\operatorname{gcd}(K_1,K_2).$$

We are able to relate this lattice structure to various algebraic operations primarily through the following proposition.

\begin{proposition}\label{typedown} Let $A$ and $B$ be $C^*$-algebras, $A$ without \textup{IBN}, and $\phi:A\to B$ a unital $*$-homomorphism. Then $B$ is without \textup{IBN} and $type(B)\leq type(A)$.
\end{proposition}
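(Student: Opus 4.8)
The plan is to deduce both claims from the functorial properties of $K_0$ — exactly as in the proof of Proposition \ref{prophomo} — combined with the concrete descriptions of $N$ and $K$ provided by Theorem \ref{basistype} and Proposition \ref{kisk}.

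First I would check that $B$ lacks IBN. Since $A$ lacks IBN, Theorem \ref{ibnchar} tells us that $[1_A]_0$ has finite order in $K_0(A)$. The induced homomorphism $K_0(\phi)\colon K_0(A)\to K_0(B)$ sends $[1_A]_0$ to $[1_B]_0$ because $\phi$ is unital, and a group homomorphism carries torsion elements to torsion elements; hence $[1_B]_0$ has finite order and $B$ lacks IBN by Theorem \ref{ibnchar} again. In particular $type(B)$ is defined, and I will write $type(A)=(N_A,K_A)$ and $type(B)=(N_B,K_B)$.

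Next, to handle the $K$-coordinate, I would invoke Proposition \ref{kisk}, which identifies $K_A$ with the order of $[1_A]_0$ and $K_B$ with the order of $[1_B]_0$. Since $K_0(\phi)[1_A]_0=[1_B]_0$, the order of $[1_B]_0$ divides the order of $[1_A]_0$; that is, $K_A\equiv 0\mod K_B$, which is precisely the second requirement in the definition of $type(B)\le type(A)$. For the $N$-coordinate, the key observation is that a unital $*$-homomorphism pushes unitary matrices forward: applying $\phi$ entrywise yields a map $M_{m,n}(A)\to M_{m,n}(B)$ that respects matrix products and adjoints and, being unital, sends identity matrices to identity matrices, so it carries a unitary matrix over $A$ to a unitary matrix over $B$. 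Thus $A^n\simeq A^m$ implies $B^n\simeq B^m$. Applying this to the equivalence $A^{N_A}\simeq A^{N_A+K_A}$ coming from the Basis Type of $A$ gives $B^{N_A}\simeq B^{N_A+K_A}$ with $K_A>0$, so $N_A$ lies in the set of ranks $j$ for which $B^j\simeq B^k$ for some $k\neq j$; since $N_B$ is by definition the minimum of that set, $N_B\le N_A$. Combined with $K_A\equiv 0\mod K_B$, this yields $type(B)\le type(A)$.

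I do not expect a serious obstacle: once one observes that unitaries push forward under a unital $*$-homomorphism, the rest is bookkeeping. The single point that needs care is the direction of the divisibility — it is $K_B$ that divides $K_A$, not the reverse — which is forced by $\phi$ mapping $[1_A]_0$ onto $[1_B]_0$ rather than the other way around.
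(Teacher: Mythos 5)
Your argument is correct and essentially identical to the paper's proof: the paper likewise obtains $K_A\equiv 0\bmod K_B$ from functoriality of $K_0$ together with Proposition \ref{kisk}, and obtains $N_B\le N_A$ by ampliating $\phi$ entrywise to push the unitary in $M_{N_A,N_A+K_A}(A)$ forward to one over $B$ and then invoking the minimality of $N_B$. The only cosmetic difference is that the paper cites Proposition \ref{prophomo} to see that $B$ lacks IBN, whereas you unwind that proposition's $K_0$ argument directly.
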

\begin{proof} Note that by Proposition \ref{prophomo} $B$ cannot have IBN. Let $type(A)=(N_A,K_A)$ and $type(B)=(N_B,K_B)$. The functoriality of $K_0$ induces a group homomorphism $K_0(\phi):K_0(A)\to K_0(B)$ which takes $[1_A]_0$ to $[1_B]_0$. Being a group homomorphism, it follows that the order of $K_0(\phi)[1_A]_0\in K_0(B)$ must divide the order of $[1_A]_0\in K_0(A)$. We thus have
$$\left|[1_A]_0\right|_{K_0(A)}\equiv 0\mod\left|[1_B]_0\right|_{K_0(B)}$$
which combines with Proposition \ref{kisk} to give us $K_A\equiv 0\mod K_B$.

We may ampliate $\phi$ to $\phi^{(m,n)}:M_{m,n}(A)\to M_{m,n}(B)$ by applying $\phi$ entry-wise. Since $\phi$ is unital any unitary matrix in $M_{m,n}(A)$ is sent, via $\phi^{(m,n)}$, to a unitary matrix in $M_{m,n}(B)$. Thus if $A^n\simeq A^m$ then so too $B^n\simeq B^m$; in particular we have $B^{N_A}\simeq B^{N_A+K_A}$. By construction (see Theorem \ref{basistype}) $N_B=\min\{n:\exists j\not=n\ s.t.\ B^n\simeq B^{j}\}$ and so we conclude that $N_B\leq N_A$.
\end{proof}
The primary utility of the previous proposition is to prove various closure properties of the class of $C^*$-algebras without IBN.

\begin{corollary} If $A$ does not have \textup{IBN} and $B$ is a quotient of $A$ then $B$ does not have \textup{IBN}.
\end{corollary}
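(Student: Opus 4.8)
The corollary follows almost immediately from Proposition \ref{typedown}, so the plan is simply to invoke it. The key observation is that ``$B$ is a quotient of $A$'' means there is a surjective $*$-homomorphism $\psi:A\to B$. Since we are working exclusively with unital $C^*$-algebras, this surjection carries $1_A$ to a unit for $B$, hence is automatically a unital $*$-homomorphism. (This same point was used in the proof of Corollary \ref{closures} for unital extensions.) With a unital $*$-homomorphism $\psi:A\to B$ in hand and $A$ without IBN by hypothesis, Proposition \ref{typedown} directly yields that $B$ is without IBN.

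Concretely, I would write: suppose $A$ does not have IBN and $\psi:A\to B$ is a surjective $*$-homomorphism exhibiting $B$ as a quotient. Then $\psi$ is unital, so Proposition \ref{typedown} applies and gives that $B$ does not have IBN (and moreover $type(B)\leq type(A)$, though only the first conclusion is needed). There are essentially no obstacles here — the substantive content was already absorbed into Proposition \ref{typedown} and, upstream, into Proposition \ref{prophomo} and Theorem \ref{ibnchar}. The only thing worth a sentence of care is the remark that quotient maps between unital $C^*$-algebras are unital, which is why the hypothesis of Proposition \ref{typedown} is met.

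If one wanted a proof that does not even cite Proposition \ref{typedown}, the alternative is to contrapose via Proposition \ref{prophomo}: if the quotient $B$ \emph{had} IBN, then composing with the quotient map $\psi:A\to B$ would exhibit a unital $*$-homomorphism from $A$ into an algebra with IBN, forcing $A$ to have IBN by Proposition \ref{prophomo} and contradicting the hypothesis. This is the $C^*$-analogue of the classical fact that IBN passes to homomorphic images is \emph{false} in general (it passes the other way), but lack of IBN does pass to quotients. I would present the one-line argument through Proposition \ref{typedown} as the main proof, since it is cleanest and also records the Basis Type inequality for free.

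\begin{proof}
If $B$ is a quotient of $A$ then there is a surjective $*$-homomorphism $\psi:A\to B$. As we are dealing with unital $C^*$-algebras, $\psi$ is unital. Since $A$ does not have IBN, Proposition \ref{typedown} applies to $\psi$ and gives that $B$ does not have IBN (and, in fact, $type(B)\leq type(A)$).
\end{proof}
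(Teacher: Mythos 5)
Your proof is correct and follows exactly the paper's approach: the paper also proves this corollary by applying Proposition \ref{typedown} to the quotient map, which is unital because it is a surjective $*$-homomorphism between unital $C^*$-algebras. The extra remark about the Basis Type inequality is a harmless bonus that the paper leaves implicit.
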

This is Proposition \ref{typedown} applied to the quotient map.

\begin{corollary}\label{sums} If $A$ and $B$ are $C^*$-algebras without \textup{IBN} then $type(A\oplus B)=type(A)\vee type(B)$.
\end{corollary}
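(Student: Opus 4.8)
The plan is to reduce everything to the single observation that a matrix over $A\oplus B$ is unitary exactly when its two coordinate components are unitary, and then read off the two coordinates of the Basis Type separately.

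First I would record the lemma underlying the whole argument. Since $M_{m,n}(A\oplus B)$ is naturally identified with $M_{m,n}(A)\oplus M_{m,n}(B)$, a matrix $U=(U_A,U_B)$ satisfies $UU^*=I$ and $U^*U=I$ if and only if both $U_A$ and $U_B$ do. Hence, via the matricial description of unitary equivalence from Section \ref{IBN},
$$(A\oplus B)^n\simeq (A\oplus B)^m \iff A^n\simeq A^m\ \text{and}\ B^n\simeq B^m.$$
Applying Proposition \ref{typedown} to the coordinate projection $A\oplus B\to A$ shows $A\oplus B$ is again without IBN, so $type(A\oplus B)=(N,K)$ is well-defined.

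For the first coordinate I would analyze the set $E_C:=\{n\geq 1:\exists\, m\neq n,\ C^n\simeq C^m\}$, whose minimum is, by Theorem \ref{basistype}, the first coordinate of $type(C)$. The structure theorem gives $E_A=\{n:n\geq N_A\}$: condition (1) puts no rank below $N_A$ into $E_A$, while for $n\geq N_A$ the equivalence $A^n\simeq A^{n+K_A}$ (obtained by adjoining $A^{n-N_A}$ to $A^{N_A}\simeq A^{N_A+K_A}$) places $n$ in $E_A$. The displayed equivalence then yields $E_{A\oplus B}=E_A\cap E_B$: any common witness $m$ shows membership in both, and conversely for $n\geq\max(N_A,N_B)$ the single choice $m=n+\operatorname{lcm}(K_A,K_B)$ simultaneously gives $A^n\simeq A^m$ and $B^n\simeq B^m$, since both ranks exceed the respective thresholds and differ by a multiple of the respective period. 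Thus $E_{A\oplus B}=[\max(N_A,N_B),\infty)$ and $N=\max(N_A,N_B)$.

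For the second coordinate I would invoke Proposition \ref{kisk}, which identifies $K$ with the order of $[1]_0$ in $K_0$. By additivity of $K_0$ we have $K_0(A\oplus B)\cong K_0(A)\oplus K_0(B)$ carrying $[1_{A\oplus B}]_0$ to $([1_A]_0,[1_B]_0)$; the order of a pair in a direct sum of abelian groups is the lcm of the orders of its coordinates, so $K=\operatorname{lcm}(K_A,K_B)$ by Proposition \ref{kisk} applied to each summand. Combining the coordinates gives $type(A\oplus B)=(\max(N_A,N_B),\operatorname{lcm}(K_A,K_B))$, which is precisely $type(A)\vee type(B)$. I expect the only genuine subtlety to be the first coordinate, where the answer is the maximum rather than the minimum: equivalence in the direct sum demands a single rank $m$ that works for both summands at once, so the binding constraint is the summand that activates its equivalences last, namely the one with the larger $N$. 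The remainder is routine bookkeeping with additivity of $K_0$ and the order-of-a-pair computation.
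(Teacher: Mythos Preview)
Your proof is correct and uses the same ingredients as the paper's: the coordinate-wise description of unitaries over $A\oplus B$, additivity of $K_0$ together with Proposition \ref{kisk} for the second coordinate, and an explicit witness $(A\oplus B)^{\max(N_A,N_B)}\simeq(A\oplus B)^{\max(N_A,N_B)+\operatorname{lcm}(K_A,K_B)}$ for the first. The only organizational difference is that the paper obtains the lower bound $type(A)\vee type(B)\leq type(A\oplus B)$ from Proposition \ref{typedown} applied to the coordinate projections and then establishes the reverse inequality separately, whereas you package both directions for the first coordinate into the single identity $E_{A\oplus B}=E_A\cap E_B$; this is a tidy reformulation rather than a genuinely different route.
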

\begin{proof} Proposition \ref{typedown} applied to the coordinate projections $(a,b)\mapsto a$ and $(a,b)\mapsto b$ has us conclude that $type(A)\leq type(A\oplus B)$ and $type(B)\leq type(A\oplus B)$ and so $type(A)\vee type(B)\leq type(A\oplus B)$.

As $K_0(A\oplus B)=K_0(A)\oplus K_0(B)$ we use Proposition \ref{kisk} to conclude that $K_{A\oplus B}=\operatorname{lcm}(K_A,K_B)$.

Suppose, without loss of generality, that $\max(N_A,N_B)=N_A$. With $K_{A\oplus B}=\operatorname{lcm}(K_A,K_B)$, we have
$$A^{N_A}\simeq A^{N_A+K_A}\simeq A^{N_A+2K_A}\simeq...\simeq A^{N_A+K_{A\oplus B}}$$
and, as $B^{N_A}\simeq B^{N_A-N_B}\oplus B^{N_B}\simeq B^{N_A-N_B}\oplus B^{N_B+K_B}\simeq B^{N_A+K_B}$, we have also
$$B^{N_A}\simeq B^{N_A+K_B}\simeq B^{N_A+2K_B}\simeq...\simeq B^{N_A+K_{A\oplus B}}.$$
Consequently
$$(A\oplus B)^{N_A}=A^{N_A}\oplus B^{N_A}\simeq A^{N_A+K_{A\oplus B}}\oplus B^{N_A+K_{A\oplus B}}\simeq (A\oplus B)^{N_A+K_{A\oplus B}}.$$
We conclude that $N_{A\oplus B}\leq N_A=\max(N_A,N_B)$. As $type(A)\wedge type(B)\leq type(A\oplus B)$, i.e.\ $\max(N_A, N_B)\leq N_{A\oplus B}$, we have equality.

In conclusion $N_{A\oplus B}=\max(N_A,N_B)$ and $K_{A\oplus B}=\operatorname{lcm}(K_A,K_B)$ and so $type(A\oplus B)=type(A)\vee type (B)$.
\end{proof}

In contrast to Corollary \ref{closures} it is quite necessary that neither summand of $A\oplus B$ has IBN. It is natural to suspect that the remaining lattice operation will correspond to tensor products.

\begin{corollary} If $A$ and $B$ are $C^*$-algebras without \textup{IBN} then $type(A\otimes B)\leq type(A)\wedge type(B)$.
\end{corollary}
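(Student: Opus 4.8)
The plan is to use the $K$-theoretic machinery already in place, exactly as in the proofs of Corollaries \ref{sums} and the preceding results. First I would observe that there are two things to prove: that $A \otimes B$ lacks IBN, and the two lattice inequalities $N_{A\otimes B} \leq \min(N_A, N_B)$ and $\operatorname{gcd}(K_A, K_B) \equiv 0 \bmod K_{A\otimes B}$ — the latter being equivalent, via Proposition \ref{kisk}, to the statement that the order of $[1_{A\otimes B}]_0$ divides $\operatorname{gcd}(K_A, K_B)$.

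For the $K$-theory component, the key point is that the unital $*$-homomorphism $A \to A \otimes B$ given by $a \mapsto a \otimes 1_B$ (and symmetrically $b \mapsto 1_A \otimes b$) lets us apply Proposition \ref{typedown}. Actually, more care is needed here since the minimal tensor product does not in general behave well, but for the \emph{statement as phrased} it suffices to pick \emph{one} tensor norm; I would note that for any $C^*$-tensor norm the map $a \mapsto a \otimes 1_B$ is a unital $*$-homomorphism, so Proposition \ref{typedown} immediately gives that $A \otimes B$ lacks IBN and $type(A \otimes B) \leq type(A)$. By symmetry $type(A \otimes B) \leq type(B)$ as well. Combining these two in the lattice — $type(A\otimes B)$ is a lower bound for both $type(A)$ and $type(B)$, hence is below their meet — yields $type(A \otimes B) \leq type(A) \wedge type(B)$ directly. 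This is in fact the whole proof: no separate computation of $N_{A\otimes B}$ or $K_{A\otimes B}$ is required, only the universal property of the meet in the lattice.

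The steps, in order, are: (1) fix a $C^*$-tensor norm and record that $\iota_A : a \mapsto a \otimes 1_B$ and $\iota_B : b \mapsto 1_A \otimes b$ are unital $*$-homomorphisms into $A \otimes B$; (2) apply Proposition \ref{typedown} to $\iota_A$ to get that $A \otimes B$ is without IBN and $type(A\otimes B) \leq type(A)$; (3) apply it to $\iota_B$ to get $type(A \otimes B) \leq type(B)$; (4) invoke the definition of $\wedge$ as greatest lower bound (or just the explicit formula: $N_{A\otimes B} \leq N_A$ and $N_{A \otimes B} \leq N_B$ give $N_{A\otimes B} \leq \min(N_A,N_B)$, while $K_A \equiv 0$ and $K_B \equiv 0 \bmod K_{A\otimes B}$ force $\operatorname{gcd}(K_A,K_B) \equiv 0 \bmod K_{A\otimes B}$) to conclude.

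I do not expect any real obstacle here; the corollary is a formal consequence of Proposition \ref{typedown} applied twice. The only point worth a sentence of comment is \emph{why equality can fail} — unlike the direct sum case, where $K_0(A \oplus B) = K_0(A) \oplus K_0(B)$ pins down the order of $[1_{A\oplus B}]_0$ exactly, the map $K_0(A) \otimes K_0(B) \to K_0(A \otimes B)$ (Künneth) need not be injective or surjective, so the order of $[1_A]_0 \otimes [1_B]_0 = [1_{A\otimes B}]_0$ can be strictly smaller than $\operatorname{gcd}(K_A, K_B)$; one could mention $\mathcal{O}_3 \otimes \mathcal{O}_3$, where $K_0 = \mathbb{Z}/2\mathbb{Z} \otimes \mathbb{Z}/2\mathbb{Z} \oplus \operatorname{Tor} = \mathbb{Z}/2\mathbb{Z}$ so the type is still $(1,2)$, matching the meet, versus a case where torsion interacts to collapse the order further. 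But establishing that failure is not required for the stated inequality, so I would relegate it to a remark.
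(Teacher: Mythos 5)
Your proposal is correct and follows exactly the paper's route: the paper proves this corollary by applying Proposition \ref{typedown} to the unital embeddings $a\mapsto a\otimes 1_B$ and $b\mapsto 1_A\otimes b$ and then using that the meet is the greatest lower bound, which is precisely your steps (1)--(4). Your side remarks (independence of the choice of $C^*$-norm, and why equality can fail) also match observations the paper itself makes in passing.
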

The proof of this corollary is nothing but Proposition \ref{typedown} applied to the embeddings $a\mapsto a\otimes 1_B$ and $b\mapsto 1_A\otimes b$. Two remarks are in order: first, that the result holds for any norm structure we may place on $A\otimes B$; second, that it is unknown (even, to our knowledge, in the purely algebraic case) whether inequality ever occurs.

\begin{corollary} If $\{A_i,\phi_{i}\}$ is an inductive system of $C^*$-algebras, each without \textup{IBN}, and each $\phi_i$ is unital, then the direct limit $C^*$-algebra $A$ of the system does not have IBN.
\end{corollary}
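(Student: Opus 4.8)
The plan is to reduce the statement to Proposition \ref{typedown} by exhibiting a unital $*$-homomorphism from one of the $A_i$ into the direct limit $A$. Recall that the direct limit comes equipped with canonical $*$-homomorphisms $\psi_i\colon A_i\to A$ satisfying $\psi_{i+1}\circ\phi_i=\psi_i$, and that $\bigcup_i\psi_i(A_i)$ is dense in $A$. The only point that needs verification is that each $\psi_i$ is unital; granting this, Proposition \ref{typedown} applied to $\psi_1\colon A_1\to A$ immediately yields that $A$ is without IBN (and in fact $type(A)\leq type(A_1)$).

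So the single step to carry out is to check that $A$ is unital with unit $\psi_1(1_{A_1})$. Since every connecting map $\phi_i$ is unital, $\psi_{i+1}(1_{A_{i+1}})=\psi_{i+1}(\phi_i(1_{A_i}))=\psi_i(1_{A_i})$, so all the elements $e:=\psi_i(1_{A_i})$ coincide; moreover $e$ is a projection with $\|e\|=1$, hence $e\neq 0$. For $a$ in the dense subalgebra $\bigcup_i\psi_i(A_i)$, writing $a=\psi_i(b)$ gives $ea=ae=\psi_i(1_{A_i}b)=\psi_i(b)=a$, and this identity extends by continuity to all of $A$. Thus $A$ is unital with $1_A=e$, and each $\psi_i$ is unital.

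With this in hand the proof is complete: $A_1$ is without IBN, $\psi_1\colon A_1\to A$ is a unital $*$-homomorphism, and Proposition \ref{typedown} forces $A$ to be without IBN. I do not anticipate a genuine obstacle; the only content is the standard fact that an inductive limit of unital $C^*$-algebras along unital connecting maps is again unital, after which the corollary is a one-line consequence of Proposition \ref{typedown}. The same tools give a little more: applying Proposition \ref{typedown} to each $\psi_i$ shows $type(A)\leq type(A_i)$ for all $i$; since $N_{A_i}$ is a non-increasing sequence of positive integers and the $K_{A_i}$ form a chain descending under divisibility, both stabilize, and combined with the continuity of $K_0$ one expects $type(A)$ to equal this stable value — though the corollary as stated does not require this refinement.
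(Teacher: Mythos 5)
Your proof is correct and follows exactly the paper's route: the paper's entire argument is ``Proposition \ref{typedown} applied to the universal maps $\psi_i:A_i\to A$, which are unital.'' Your verification that the limit is unital and that the $\psi_i$ are unital is a standard fact the paper takes for granted, and your concluding speculation about the limiting Basis Type, while plausible, is not needed for the statement.
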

The proof of this corollary is Proposition \ref{typedown} applied to the universal maps $\psi_i:A_i\to A$, which are unital.

Finally, we will demonstrate that the class of $C^*$-algebras without Invariant Basis Number is unfortunately \emph{not} closed under Morita equivalence. A good reference for the theory of Morita equivalence is \cite{raeburn}. Our motivating example is the algebra $\mathcal{O}_\infty$ and the fact that the identity of a corner $C^*$-algebra $p\mathcal{O}_\infty p$ is the projection $p$.

\begin{proposition}\label{morita} Let $A$ be a infinite simple unital $C^*$-algebra, then there is a $C^*$-algebra $B$ Morita equivalent to $A$ which does not have \textup{IBN}.
\end{proposition}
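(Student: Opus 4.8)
The plan is to produce $B$ as a corner $pAp$ of $A$ for a suitable projection $p$ and then compute $type(B)$ via Theorem \ref{ibnchar}. Since $A$ is simple, unital, and infinite, a standard result (Cuntz) gives that $A$ is \emph{properly infinite}: there exist isometries $s_1, s_2 \in A$ with $s_1 s_1^* + s_2 s_2^* \leq 1$. In particular $1_A$ is equivalent to a proper subprojection of itself, and iterating/combining with simplicity one obtains projections $p \in A$ with $[p]_0 = 0$ in $K_0(A)$ yet $p \neq 0$; indeed since $A$ is properly infinite and simple, $p = 1 - s_1 s_1^*$ is a nonzero projection with $[p]_0 = [1]_0 - [1]_0 = 0$. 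Set $B := pAp$. Then $B$ is a unital $C^*$-algebra with unit $p$, and $B$ is Morita equivalent to $A$ because $p$ is a full projection: its closed ideal $\overline{ApA}$ equals $A$ by simplicity of $A$.

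The key computation is that $[1_B]_0 = [p]_0$ has finite order in $K_0(B)$. By Morita invariance of $K_0$, the inclusion $B = pAp \hookrightarrow A$ induces an isomorphism $K_0(B) \xrightarrow{\sim} K_0(A)$ sending $[p]_0 \mapsto [p]_0$. Since we arranged $[p]_0 = 0$ in $K_0(A)$, we get $[1_B]_0 = 0$ in $K_0(B)$, which has order $1 < \infty$. By Theorem \ref{ibnchar}, $B$ does not have IBN (in fact $type(B) = (1,1)$). So the steps, in order, are: (i) invoke proper infiniteness of $A$ to produce a nonzero projection $p$ with $[p]_0 = 0$; (ii) verify $p$ is full, so $B = pAp$ is Morita equivalent to $A$; (iii) apply Morita invariance of $K_0$ to transport $[p]_0 = 0$ into $K_0(B)$; (iv) conclude via Theorem \ref{ibnchar}.

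**The main obstacle** is step (i): one must be careful that "infinite" for a simple unital $C^*$-algebra upgrades to "properly infinite," and that one can genuinely find a nonzero $p$ with vanishing $K_0$-class rather than merely a proper subprojection equivalent to $1_A$. The cleanest route is: infiniteness gives an isometry $v \in A$ with $vv^* \neq 1$, so $p_0 := 1 - vv^* \neq 0$; simplicity of $A$ then forces $A$ to be properly infinite (a theorem of Cuntz), which in turn means $1_A \sim 1_A \oplus 1_A$ at the level of $M_2(A)$, whence there is an isometry $w$ with $[1-ww^*]_0 = [1]_0 - [1]_0 = 0$ and $1 - ww^* \neq 0$. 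Take $p = 1 - ww^*$. Fullness of $p$ is then automatic from simplicity (the ideal it generates is nonzero, hence all of $A$), and the rest is routine invocation of Morita invariance. An alternative, avoiding proper infiniteness entirely, is to note that $\mathcal{O}_\infty$ embeds unitally into $A$ by a result on infinite simple $C^*$-algebras and pull back a suitable projection — but the corner construction above is more self-contained.
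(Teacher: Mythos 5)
Your proposal is correct and follows essentially the same route as the paper: take a proper isometry $v\in A$, observe that $[1_A-vv^*]_0=[1_A]_0-[vv^*]_0=0$ since $vv^*\sim v^*v=1_A$, and pass to the full corner $B=(1_A-vv^*)A(1_A-vv^*)$, which is Morita equivalent to $A$ by simplicity and satisfies $[1_B]_0=0$, so Theorem \ref{ibnchar} applies. The detour through proper infiniteness is unnecessary (the one proper isometry already yields a nonzero projection with vanishing $K_0$-class), and the parenthetical claim that $type(B)=(1,1)$ would require a separate argument, but neither point affects the validity of the proof.
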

\begin{proof} If $A$ is infinite then there exists a proper isometry $v\in A$. As $vv^*\sim v^*v=1_A$ we have
$$[1_A]_0=[1_A-vv^*]_0+[vv^*]_0=[1_A-vv^*]_0+[1_A]_0$$
and so $[1_A-vv^*]_0=0$ in $K_0(A)$. Now consider the full corner $B=(1_A-vv^*)A(1_A-vv^*)$, which is Morita-equivalent to $A$ \cite[Example 3.6]{raeburn}, and note that $1_B=1_A-vv^*$. Thus $[1_B]_0=0$ in $K_0(B)$ and so $B$ does not have IBN.
\end{proof}

Returning to the concrete example, $\mathcal{O}_\infty$ is a unital simple infinite $C^*$-algebra. We have seen before that $\mathcal{O}_\infty$ has IBN but now, by the above Proposition, it contains many full corners which does not have IBN.

\end{section}

\begin{section}{Universal Algebras for Basis Types}\label{univ}
A natural question stemming from the discussion of Basis Type is this: are all pairs $(N,K)$ of positive integers realized as the Basis Types of $C^*$-algebras? We shall answer this in the affirmative and further we will exhibit $C^*$-algebras which are ``universal" for their Basis Type.

Our investigation will be motivated by the situation for the Basis Types $(1,K)$. If $type(A)=(1,K)$ then necessarily $A\simeq A^{K+1}$ and so there is a unitary $1\times (K+1)$ matrix, i.e.\ a row unitary. The elements of such a matrix are isometries satisfying the Cuntz relations and so there is an induced unital $*$-homomorphism (in fact, an embedding) of $\mathcal{O}_{K+1}$ into $A$. Now as $\mathcal{O}_{K+1}\simeq\mathcal{O}_{K+1}^{K+1}$ and $K_0(\mathcal{O}_{K+1})=\mathbb{Z}/K\mathbb{Z}$ we conclude via Proposition \ref{kisk} that $type(\mathcal{O}_{K+1})=(1,K)$. We consider the Cuntz algebra $\mathcal{O}_{K+1}$ ``universal" for Basis Type $(1,K)$ in this sense: whenever $type(A)=(1,K)$ there is an induced unital $*$-homomorphism $\phi:\mathcal{O}_{K+1}\to A$. We use the term universal loosely because this homomorphism is not necessarily unique. For example, when $A$ is itself a Cuntz algebra then $\phi$ can be given by any permutation of the generating isometries.

In \cite{mcclanahan} McClanahan investigated $C^*$-algebras $U_{m,n}^{nc}$ defined as follows:
$$U_{m,n}^{nc}:=C^*\left(u_{ij}:U=[u_{ij}]\in M_{m,n}\ \textrm{satisfies}\ UU^*=I_m,\ U^*U=I_n\right).$$
The $C^*$-algebra $U_{m,n}^{nc}$ has the universal property that whenever $A$ is a $C^*$-algebra with elements $\{a_{ij}\}$ such that $[a_{ij}]\in M_{m,n}(A)$ is unitary then there is a unital $*$-homomorphism $\phi:U_{m,n}^{nc}\to A$ with $\phi(u_{ij})=a_{ij}$. Since there is a natural identification of $U_{m,n}^{nc}$ with $U_{n,m}^{nc}$ (taking $u_{ij}$ to $u_{ji}^*$) we shall only consider the cases where $n>m$.

Suppose that $A$ is a $C^*$-algebra with $type(A)=(N,K)$. Then by definition $A^N\simeq A^{N+K}$ and so there is an $N\times (N+K)$ unitary matrix over $A$. By the universal property we have a unital $*$-homomorphism $\phi:U_{N,N+K}^{nc}\to A$. Thus we may recast the universal property enjoyed by the $U_{m,n}^{nc}$ as follows: if $A$ is a $C^*$-algebra of Basis Type $(m,n-m)$ then there is a unital $*$-homomorphism $\phi:U_{m,n}^{nc}\to A$. McClanahan proved that $U_{1,n}^{nc}=\mathcal{O}_n$ and so there is no conflict with our previous discussion. He further demonstrated that $U_{m,n}^{nc}$ is not simple whenever $m>0$ (there is always a unital $*$-homomorphism $\phi:U_{m,n}^{nc}\to \mathcal{O}_{n-m+1}$) and so, unlike for the Cuntz algebras, the universal property does not guarantee an embedding of $U_{m,n}^{nc}$ into a $C^*$-algebra when $m>1$.

Since $U_{m,n}^{nc}$, by definition, has a unitary $m\times n$ matrix we conclude that its standard modules of ranks $n$ and $m$ are equivalent, and so $U_{m,n}^{nc}$ does not have IBN. Ara and Goodearl have recently shown in \cite{ara} that $K_0(U_{m,n}^{nc})=\mathbb{Z}/(n-m)\mathbb{Z}$ (and is generated by $[1]_0$) and so by Proposition \ref{kisk} we have that $type(U_{m,n}^{nc})=(N,n-m)$ for some $N\leq m$. To prove that we have $N=m$ we shall exploit the universal property of $U_{m,n}^{nc}$ together with our next main result.

\begin{theorem}\label{typeexists} For each pair $(N,K)$ of positive integers there is a $C^*$-algebra $A$ with $type(A)=(N,K)$.
\end{theorem}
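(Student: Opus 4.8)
The plan is to build the algebra $A$ by hand as a sufficiently ``rigid'' $C^*$-algebra that carries a unitary $N \times (N+K)$ matrix but admits no unitary rectangular matrix of smaller size. The first candidate to try is the universal algebra $U_{N,N+K}^{nc}$ itself: we already know from the discussion preceding the theorem that it has $K_0 = \mathbb{Z}/K\mathbb{Z}$ generated by $[1]_0$, hence $type(U_{N,N+K}^{nc}) = (N', K)$ for some $N' \le N$ by Proposition \ref{kisk}, and the only thing left to prove is $N' = N$. So the real content of the theorem is a lower bound on the smallest rank at which standard modules collapse. Equivalently, I must show that $U_{N,N+K}^{nc}$ has \emph{no} unitary $m \times n$ matrix with $m < N$ (and $m \ne n$); by Theorem \ref{basistype} and the reduction already carried out in its proof, it suffices to rule out $A^m \simeq A^{m+j}$ for $1 \le m < N$.

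The strategy for the lower bound is to produce, for each $m < N$, a unital $*$-homomorphism from $U_{N,N+K}^{nc}$ into some $C^*$-algebra $B_m$ that \emph{does} have IBN, or more precisely whose Basis Type is not $\le (m, \ast)$. The cleanest route: exhibit a unital $*$-homomorphism $U_{N,N+K}^{nc} \to M_{?}(C(X)) \otimes (\text{something})$, or better, map into a direct sum $D \oplus \mathcal{O}_{K+1}$ where $D$ has IBN, chosen so that the composite forces $N_{U_{N,N+K}^{nc}} \ge N$. Concretely, I would look for a representation of a universal $N \times (N+K)$ unitary by operators on a Hilbert space that, when restricted to the appropriate corner, looks like an honest isometry between finite-rank-type modules of ranks $N-1$ and below. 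Equivalently, use Proposition \ref{typedown} in reverse: if I can find a unital $*$-homomorphism $\phi: U_{N,N+K}^{nc} \to B$ with $N_B = N$, then $N \le N_{U_{N,N+K}^{nc}}$, which combined with $N_{U_{N,N+K}^{nc}} \le N$ finishes it. Such a $B$ with $type(B) = (N,K)$ would need to be constructed first, so this risks circularity; instead I would aim for a $B$ with, say, $type(B)=(N, K')$ for a multiple $K'$ of $K$, or handle the $N$ and $K$ constraints with two separate homomorphisms and combine via Corollary \ref{sums}.

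A concrete construction for realizing the constraint on $N$: consider the $C^*$-algebra generated by a single $N \times (N+K)$ unitary matrix acting ``as block-diagonally as possible'' — take $\mathcal{O}_{K+1}$ with its canonical row unitary $V = [v_1 \ \cdots \ v_{K+1}]$, and form the $(N) \times (N+K)$ unitary $W = I_{N-1} \oplus V$ (a block-diagonal matrix with an $(N-1)\times(N-1)$ identity block and the $1 \times (K+1)$ block $V$). This $W$ is unitary over $\mathcal{O}_{K+1}$, inducing $\phi: U_{N,N+K}^{nc} \to \mathcal{O}_{K+1}$, which recovers only $type(\mathcal{O}_{K+1}) = (1,K)$ and does not help. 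The honest obstacle, then, is manufacturing an algebra where the rank $N-1$ modules are provably distinct while rank $N$ collapses — and here I would fall back on a more flexible model: take the quotient of $U_{N,N+K}^{nc}$ (or a corner of $\mathcal{O}_\infty \otimes M_{N-1} $-type construction) and track $K_0$ of the corner $pU_{N,N+K}^{nc}p$ for suitable projections $p$, or simply pass to $M_{N-1}(\mathbb{C}) \oplus U_{1,1+K}^{nc}$-style sums and invoke Corollary \ref{sums}: $type(M_{N-1}(\mathbb{C}) \text{-related block with } N \text{ forced})$. I expect the main obstacle to be exactly this: showing the lower bound $N_{U_{N,N+K}^{nc}} = N$ rather than something strictly smaller, i.e. proving genuine rigidity of the universal algebra at ranks below $N$. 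The likely resolution is to exhibit a concrete faithful-enough representation or a quotient map onto an algebra of known Basis Type $(N, K)$, and the cleanest such target is probably obtained by amplifying the $(1,K)$ example: one checks that $M_{N-1}(\mathbb{C}) \oplus \mathcal{O}_{K+1}$ has a unital $*$-homomorphism from $U_{N, N+K}^{nc}$ (via a block unitary $I_{N-1} \oplus V$ landing in $M_{N-1}(\mathbb{C}) \oplus \mathcal{O}_{K+1}$ read correctly) and simultaneously has $N_B = N$ by a direct $K_0$ computation — $M_{N-1}(\mathbb{C})$ has IBN and contributes $[1]_0$ of infinite order in the first coordinate, preventing any collapse at ranks $< N$ after summing. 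Once $type(U_{N,N+K}^{nc}) = (N,K)$ is established, Theorem \ref{typeexists} follows by taking $A = U_{N,N+K}^{nc}$.
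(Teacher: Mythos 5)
You have correctly isolated the crux of the theorem --- one must manufacture an algebra in which the standard modules of rank below $N$ are provably distinct while collapse first occurs at rank $N$ --- and your meta-strategy (bound $N_{U^{nc}_{N,N+K}}$ from below by mapping unitally onto an algebra $B$ of known basis type, then combine the $N$- and $K$-constraints via Corollary \ref{sums}) is essentially the paper's. But the concrete target you finally offer does not work: $M_{N-1}(\mathbb{C})\oplus\mathcal{O}_{K+1}$ \emph{has} IBN, because $M_{N-1}(\mathbb{C})$ is stably finite and Corollary \ref{closures} shows a direct sum inherits IBN from a single summand. A rectangular unitary over $C\oplus D$ is exactly a pair of rectangular unitaries of the same shape, one over each summand, so there is no non-square unitary over this algebra at all; hence no unital $*$-homomorphism from $U^{nc}_{N,N+K}$ into it exists, and the block matrix $I_{N-1}\oplus V$ cannot be ``read into'' the direct sum. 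The same objection defeats any target containing a summand with IBN. Thus the lower bound $N'=N$, which you yourself identify as the honest obstacle, is left unproved, and no algebra of basis type $(N,K)$ with $N>1$ is actually exhibited.

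The missing ingredient is a genuinely nontrivial existence result. The paper invokes R{\o}rdam's constructions to produce, for each $N$, a unital $C^*$-algebra $A$ with $K_0(A)=0$ such that $M_n(A)$ is finite for $n<N$ and properly infinite for $n\geq N$. Triviality of $K_0$ forces basis type $(N',1)$ for some $N'$; proper infiniteness of $M_N(A)$ together with $K_0(M_N(A))=0$ yields an embedding of $\mathcal{O}_2$ into $M_N(A)$, hence an $N\times 2N$ unitary over $A$ and $N'\leq N$; and finiteness of $M_{N'}(A)$ for $N'<N$ rules out an $N'\times(N'+1)$ unitary, since deleting a column would produce a proper square isometry, forcing $N'=N$. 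The direct sum with $\mathcal{O}_{K+1}$ and Corollary \ref{sums} then give type $(N,1)\vee(1,K)=(N,K)$, and only afterwards does $type(U^{nc}_{N,N+K})=(N,K)$ follow via Proposition \ref{typedown} and the universal property. Without an input of this strength --- an algebra whose matrix amplifications switch from finite to properly infinite exactly at size $N$ --- the argument cannot be completed by block-matrix manipulations of the kind you propose.
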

\begin{proof}
We have already seen that for $K>0$, $type(\mathcal{O}_{K+1})=(1,K)$. As $(1,K) \vee (N,1)=(N,K)$ we conclude by Corollary \ref{sums} that it is enough, given $N>0$, to exhibit a $C^*$-algebra of Basis Type $(N,1)$.

By combining \cite[Theorem 3.5]{rordamsums} and \cite[Theorem 5.3]{rordamstability} we may, for fixed $N>0$, obtain a unital $C^*$-algebra $A$ with the following properties:
\begin{enumerate}
\item for $n<N$ the $C^*$-algebras $M_n(A)$ are finite,
\item for $m\geq N$ the $C^*$-algebras $M_m(A)$ are properly infinite, and
\item $K_0(A)=0$.
\end{enumerate}
Since $K_0(A)=0$ it follows that from Theorem \ref{ibnchar} and Proposition \ref{kisk} that $A$ does not have IBN and has basis type $(N',1)$ for some $N'>0$.
Since $K_0(M_N(A))=K_0(A)=0$ and $M_N(A)$ is properly infinite there is an embedding (see  \cite[Prop. 4.2.3]{rordamclass}) of $\mathcal{O}_2$ into $M_N(A)$. Thus there is a $1\times 2$ unitary matrix (with entries consisting of the images of the Cuntz isometries) over $M_N(A)$ which, viewed in a different light, is an $N\times 2N$ unitary matrix over $A$ itself. Thus $A^N\simeq A^{2N}$ and we conclude that $N'\leq N$. Suppose that $N'<N$. As $type(A)=(N',1)$ we have $A^{N'}\simeq A^{N'+1}$ and so there is a unitary $N'\times (N'+1)$ matrix. Deleting any one column from this matrix yields a $N'\times N'$ proper isometry, contradicting the fact that $M_{N'}(A)$ is finite. Hence $N'=N$ and $type(A)=(N,1)$.
\end{proof}
We emphasize that the $C^*$-algebras in Theorem \ref{typeexists} (obtained from \cite{rordamstability} and \cite{rordamsums}) are not simple. Since the $C^*$-algebras $U_{m,n}^{nc}$ are also not simple in general, it is a question of some interest to us if Basis Types beyond $(1,K)$ are possible for simple $C^*$-algebras.
\begin{corollary} $type(U_{m,n}^{nc})=(m,n-m).$
\end{corollary}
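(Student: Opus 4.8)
The plan is to combine three ingredients already available: the computation $K_0(U_{m,n}^{nc})=\mathbb{Z}/(n-m)\mathbb{Z}$ with generator $[1]_0$ (Ara--Goodearl), the existence result of Theorem \ref{typeexists}, and the monotonicity of Basis Type under unital $*$-homomorphisms (Proposition \ref{typedown}). Writing $type(U_{m,n}^{nc})=(N,K)$, note first that $U_{m,n}^{nc}$ fails to have IBN because its defining generators assemble into a unitary $m\times n$ matrix; hence Proposition \ref{kisk} applies, and since $[1]_0$ has order $n-m$ we get $K=n-m$ at once. Everything thus reduces to identifying $N$, and the goal is to show $N=m$ by proving the two inequalities $N\le m$ and $N\ge m$ separately.

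For the upper bound $N\le m$ I would simply observe that the unitary matrix $U=[u_{ij}]\in M_{m,n}(U_{m,n}^{nc})$ implements a unitary equivalence $(U_{m,n}^{nc})^m\simeq (U_{m,n}^{nc})^n$. Thus $m$ is a rank at which distinctness of the standard modules fails, and since $N$ is (by the construction in Theorem \ref{basistype}) the least such rank, we conclude $N\le m$.

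For the lower bound $N\ge m$ I would invoke Theorem \ref{typeexists} to obtain a $C^*$-algebra $A$ with $type(A)=(m,n-m)$. Then $A^m\simeq A^{m+(n-m)}=A^n$, so there is an $m\times n$ unitary matrix over $A$, and the universal property of $U_{m,n}^{nc}$ furnishes a unital $*$-homomorphism $\phi:U_{m,n}^{nc}\to A$. Proposition \ref{typedown} now gives $type(A)\le type(U_{m,n}^{nc})$, i.e.\ $(m,n-m)\le (N,n-m)$, which by the definition of the lattice order forces $m\le N$. Combining this with the previous paragraph yields $N=m$ and hence $type(U_{m,n}^{nc})=(m,n-m)$.

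I do not expect a genuine obstacle once Theorem \ref{typeexists} is in hand; the only point that requires care is the direction of the induced homomorphism and the matching direction of the lattice inequality. The universal property produces a map \emph{out of} $U_{m,n}^{nc}$ into the witness algebra $A$, so Proposition \ref{typedown} must be applied to that map, yielding $type(A)\le type(U_{m,n}^{nc})$ — which is exactly what bounds $N$ from below. A more hands-on approach, trying to rule out rectangular unitaries over $U_{m,n}^{nc}$ of size smaller than $m\times n$ directly, would seem to require precisely the finiteness information that Theorem \ref{typeexists} has already packaged for us, so routing the argument through an external algebra of the prescribed Basis Type is the efficient path.
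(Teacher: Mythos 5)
Your proposal is correct and follows essentially the same route as the paper: the defining $m\times n$ unitary gives $N\le m$, Ara--Goodearl's computation of $K_0(U_{m,n}^{nc})$ together with Proposition \ref{kisk} gives $K=n-m$, and the witness algebra from Theorem \ref{typeexists} combined with the universal property and Proposition \ref{typedown} gives $m\le N$. Your care about the direction of the induced homomorphism and the resulting lattice inequality matches the paper's intended argument exactly.
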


This is obtained from Theorem \ref{typeexists}, Proposition \ref{typedown}, and the universal property of $U_{m,n}^{nc}$. 

\begin{corollary} $U_{m,n}^{nc}=U_{m',n'}^{nc}$ if and only if $n=n'$ and $m=m'$.
\end{corollary}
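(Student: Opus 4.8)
The plan is to deduce this from the fact that the Basis Type is a $*$-isomorphism invariant, together with the preceding corollary. First I would dispose of the reverse implication, which is trivial: the defining relations of $U_{m,n}^{nc}$ depend only on the pair $(m,n)$, so if $n=n'$ and $m=m'$ then the two universal $C^*$-algebras are literally the same.

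For the forward implication, suppose $\theta\colon U_{m,n}^{nc}\to U_{m',n'}^{nc}$ is a $*$-isomorphism. Both algebras fail to have IBN, since each has a rectangular (non-square) unitary matrix over it by construction, so each has a well-defined Basis Type. Applying Proposition \ref{typedown} to the unital $*$-homomorphism $\theta$ gives $type(U_{m',n'}^{nc})\leq type(U_{m,n}^{nc})$, and applying it to $\theta^{-1}$ gives the reverse inequality; since the order on Basis Types is genuinely antisymmetric, we conclude $type(U_{m,n}^{nc})=type(U_{m',n'}^{nc})$.

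Now I would invoke the preceding corollary, $type(U_{m,n}^{nc})=(m,n-m)$ and $type(U_{m',n'}^{nc})=(m',n'-m')$, to obtain $(m,n-m)=(m',n'-m')$. Reading off the two coordinates yields $m=m'$ and $n-m=n'-m'$, hence $n=n'$; here the standing convention $n>m$ and $n'>m'$ is precisely what makes the pair $(m,n)$ recoverable from $(m,n-m)$. I do not expect any real obstacle. The only point that deserves a word of care is that the Basis Type is an isomorphism invariant, which is exactly the symmetric application of Proposition \ref{typedown} above; note in particular that $K_0$ alone would not suffice, since $K_0(U_{m,n}^{nc})=\mathbb{Z}/(n-m)\mathbb{Z}$ only recovers $n-m$, and it is the $N$-component of the Basis Type supplied by the previous corollary that pins down $m$.
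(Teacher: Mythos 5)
Your proof is correct and follows exactly the route the paper intends: the statement is an immediate consequence of the preceding corollary $type(U_{m,n}^{nc})=(m,n-m)$ together with the fact that Basis Type is a $*$-isomorphism invariant (Proposition \ref{typedown} applied in both directions). Your additional remarks about antisymmetry of the order and the insufficiency of $K_0$ alone are accurate and consistent with the paper's own commentary.
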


Note that the Basis Types are able to distinguish the $C^*$-algebras $U_{m,n}^{nc}$ and $U_{m+1,n+1}^{nc}$ while the $K$-theory cannot: they share the same $K_0$ group, $\mathbb{Z}/(n-m)\mathbb{Z}$, and  both have trivial $K_1$ (see \cite[\S 5]{ara}). 

Finally, we are able to use the $C^*$-algebras $U_{m,n}^{nc}$ to prove that IBN is preserved under inductive limits. In \cite[Remark, pp1066]{mcclanahan} McClanahan notes that $U_{m,n}^{nc}$ is \emph{semiprojective} in the sense of \cite[\S3]{effros}: that whenever $\{B_i\}$ is an inductive system of $C^*$-algebras with limit $B$ and $\phi:U_{m,n}^{nc}\to B$ is a unital $*$-homomorphism then there exists a unital $*$-homomorphism $\phi_k:U_{m,n}^{nc}\to B_k$ for some $k$.
\begin{proposition} If $\{A_i,\phi_i\}$ is an inductive family of $C^*$-algebras, each with IBN and each $\phi_i$ unital, then the $C^*$-algebraic direct limit $A$ of the system has IBN.
\end{proposition}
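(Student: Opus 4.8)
The plan is to prove the contrapositive, leveraging the semiprojectivity of the algebras $U_{m,n}^{nc}$ recalled just above. Suppose the direct limit $A$ fails to have IBN. Then there are ranks $n>m\geq 1$ with $A^n\simeq A^m$, so there is a unitary $m\times n$ matrix over $A$; its entries satisfy the defining relations of $U_{m,n}^{nc}$, and the universal property of that algebra yields a unital $*$-homomorphism $\phi:U_{m,n}^{nc}\to A$.

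Next I would observe that, because every connecting map $\phi_i$ is unital, the limit $A$ is unital and the canonical maps $\psi_i:A_i\to A$ are unital, so $\{A_i,\phi_i\}$ genuinely is an inductive system in the unital category with unital limit $A$. Applying the semiprojectivity of $U_{m,n}^{nc}$ to this system and to $\phi$ produces an index $k$ and a unital $*$-homomorphism $\phi_k:U_{m,n}^{nc}\to A_k$.

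To finish, note that $U_{m,n}^{nc}$ does not have IBN: it carries, by construction, a unitary $m\times n$ matrix and $n\neq m$. Proposition \ref{typedown} applied to the unital map $\phi_k:U_{m,n}^{nc}\to A_k$ then forces $A_k$ to be without IBN, contradicting the hypothesis that each $A_i$ has IBN. Hence $A$ has IBN.

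I do not anticipate a real obstacle here; the substance is carried entirely by McClanahan's semiprojectivity result and Proposition \ref{typedown}. The only point needing care is the routine bookkeeping in the unital category --- verifying that the $\psi_i$ are unital so that the stated form of semiprojectivity applies. For what it is worth, one can also argue purely $K$-theoretically: since $K_0$ is continuous, $K_0(A)=\varinjlim K_0(A_i)$ and $[1_A]_0$ is the image of each $[1_{A_i}]_0$; were $[1_A]_0$ of finite order $k$, then $k[1_{A_j}]_0=0$ already at some finite stage $j$, so $[1_{A_j}]_0$ would have finite order and $A_j$ would fail IBN by Theorem \ref{ibnchar} --- a contradiction. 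I would present the semiprojectivity argument as the main one, since it mirrors the use of the universal property of $U_{m,n}^{nc}$ developed in this section.
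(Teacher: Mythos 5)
Your proof is correct and follows essentially the same route as the paper: semiprojectivity of $U_{m,n}^{nc}$ pushes the unital map $U_{m,n}^{nc}\to A$ down to some $A_k$, and the contradiction is then extracted (the paper cites Proposition \ref{prophomo} where you cite Proposition \ref{typedown}, but these are contrapositive forms of the same step). Your supplementary $K$-theoretic argument via continuity of $K_0$ is also valid and is a nice, more elementary alternative.
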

\begin{proof} If the limit $A$ did not have IBN then it must have some Basis Type $(N,K)$. By the universal property there is a unital $*$-homomorphism $\psi:U_{N,N+K}^{nc}\to A$ and hence also, because of the semiprojectivity, a unital $*$-homomorphism $\psi_n:U_{N,N+K}^{nc}\to A_n$ for some $n$. But, as $A_n$ has IBN, we would then conclude by Proposition \ref{prophomo} that $U_{N,N+K}^{nc}$ has IBN, a clear contradiction.
\end{proof}

\end{section}

\begin{section}{Stronger Notions}\label{otherIBN}
In \cite{cohn} Cohn considered two ring-theoretic properties strictly stronger than Invariant Basis Number. The $C^*$-algebraic analogues are formulated below.
\begin{definition} A $C^*$-algebra has $\textup{IBN}_1$ if, whenever $n,m$ are integers and $X$ an $A$-module, $A^n\simeq A^m\oplus X$ implies $n\geq m$.
\end{definition}
\begin{definition} A $C^*$-algebra $A$ has $\textup{IBN}_2$ if for all $n>0$, $A^n\simeq A^n\oplus X$ for some $A$-module $X$ implies $X=0$.
\end{definition}
The next proposition is nearly immediate.
\begin{proposition} $\textup{IBN}_2\ \Rightarrow\ \textup{IBN}_1\ \Rightarrow\ \textup{IBN}$.
\end{proposition}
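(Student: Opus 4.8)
The plan is to prove the two implications separately, each time reducing to the defining property of the stronger notion applied to a carefully chosen module $X$. For $\textup{IBN}_1\Rightarrow\textup{IBN}$: assume $A$ has $\textup{IBN}_1$ and suppose $A^n\simeq A^m$ (the ranks of standard modules being positive, we have $n,m\geq 1$). Since the zero $A$-module contributes nothing, $A^m\simeq A^m\oplus 0$, so the hypothesis gives $A^n\simeq A^m\oplus 0$ and hence $n\geq m$ by $\textup{IBN}_1$. By the symmetric argument $m\geq n$, whence $n=m$; this is precisely IBN.

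For $\textup{IBN}_2\Rightarrow\textup{IBN}_1$: assume $A$ has $\textup{IBN}_2$ and suppose $A^n\simeq A^m\oplus X$ for some $A$-module $X$; we must show $n\geq m$. Suppose instead that $n<m$ and set $k=m-n\geq 1$. Splitting off summands of the standard module, $A^m\simeq A^n\oplus A^k$, and therefore
$$A^n\simeq A^n\oplus A^k\oplus X=A^n\oplus\left(A^k\oplus X\right).$$
Now $\textup{IBN}_2$, applied to the rank $n\geq 1$ with the module $A^k\oplus X$, forces $A^k\oplus X=0$; in particular $A^k=0$. But $A^k\simeq A\oplus A^{k-1}$ since $k\geq 1$ (or, more directly, the coordinate projection $\pi_1\colon A^k\to A$ is surjective), so $A=0$, contradicting the standing assumption that $A$ is a unital $C^*$-algebra. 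Hence $n\geq m$, i.e.\ $A$ has $\textup{IBN}_1$.

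Everything here is elementary, and I do not expect a genuine obstacle: the proposition is essentially an unwinding of definitions. The only points that warrant a moment's attention are the trivial unitary equivalences $A^m\simeq A^m\oplus 0$ and $A^m\simeq A^n\oplus A^k$ for $n+k=m$, together with the observation that a direct summand of the zero module is zero — which is what makes the equation $A^k\oplus X=0$ actually collapse the construction. The one degenerate case to keep in mind, namely $A=0$, is ruled out by the blanket hypothesis that all $C^*$-algebras considered here are unital.
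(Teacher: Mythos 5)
Your proof is correct and follows essentially the same route as the paper: both implications are obtained by rewriting $A^m$ as $A^n\oplus A^{m-n}$ (respectively $A^m\oplus 0$) and invoking the stronger property, with the contradiction coming from $A^{m-n}\oplus X=0$ forcing $m=n$. The only difference is that you spell out why $A^{m-n}=0$ is absurd (via unitality of $A$) and run the $\textup{IBN}_1\Rightarrow\textup{IBN}$ step symmetrically rather than by contradiction; neither changes the substance.
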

\begin{proof} Suppose $A$ has $\textup{IBN}_2$. If $n<m$ and $A^n\simeq A^m\oplus X$ for some $A$-module $X$ then $A^n\simeq A^n\oplus A^{m-n}\oplus X$ and we conclude by $\textup{IBN}_2$ that $A^{m-n}\oplus X=0$, i.e.\ $m-n=0$ a contradiction.
Suppose that $A$ has $\textup{IBN}_1$. If $A^n\simeq A^m$ for $n>m$ then $A^m\simeq A^n\oplus 0$ and so $n\leq m$, a contradiction.
\end{proof}
Our main goal for this section is twofold: first, to demonstrate that these properties are distinct; and second, to better characterize $C^*$-algebras satisfying the properties $\textup{IBN}_1$ and $\textup{IBN}_2$. This goal is easily accomplished for the property $\textup{IBN}_2$.
\begin{theorem} A $C^*$-algebra $A$ has $\textup{IBN}_2$ if and only if $A$ is stably finite.
\end{theorem}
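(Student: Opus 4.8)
The statement to prove is that a unital $C^*$-algebra $A$ has $\textup{IBN}_2$ if and only if $A$ is stably finite. I would prove the two implications separately, and in each direction the key is to translate the module-theoretic condition ``$A^n \simeq A^n \oplus X$'' into a statement about projections in matrix algebras, using the structure theory recalled in Section~2: every finitely generated projective Hilbert $A$-module is of the form $pA^k$ for a matrix projection $p$, and $A^n \oplus X \simeq A^n$ forces $X$ itself to be finitely generated projective (being a direct summand of the finitely generated projective module $A^n$), say $X \simeq pA^k$. Then $A^n \simeq A^n \oplus pA^k$ is equivalent to the existence of a projection $q \in P_{n+k}(A)$ with $I_n \oplus p \sim q$ and $q \sim I_n \oplus 0$ simultaneously, i.e.\ $I_n$ is Murray--von Neumann equivalent to a proper subprojection of $I_n \oplus p$ inside $M_{n+k}(A)$. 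Saying $X \neq 0$ amounts to saying $p \neq 0$, so this subprojection is genuinely proper.

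\textbf{Stably finite $\Rightarrow \textup{IBN}_2$.} Suppose $A$ is stably finite, i.e.\ no $M_k(A)$ contains a proper isometry (equivalently, $I_k$ is not equivalent to a proper subprojection of itself in any $M_k(A)$). Assume $A^n \simeq A^n \oplus X$ with $X$ a nonzero $A$-module. As above $X$ must be finitely generated projective and nonzero, say $X \simeq pA^k$ with $p \in P_k(A)$, $p \neq 0$, and we get $I_n \sim q \leq I_n \oplus p$ in $M_{n+k}(A)$ with $q \neq I_n \oplus p$. But then $I_{n+k} \sim I_n \oplus I_k \succeq I_n \oplus p \succ q \sim I_n$, and adding $I_k - p$ (or simply working in $M_{n+k}(A)$ with $I_{n+k}$) shows $I_{n+k}$ is equivalent to a proper subprojection of itself, contradicting stable finiteness. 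I would phrase this last step carefully: from $q \sim I_n$ and $q$ a proper subprojection of $I_n \oplus p$, I get $q \oplus (I_k - p) \sim I_n \oplus (I_k - p) = I_{n+k} - (I_n \oplus p) + I_n$... actually cleaner is: $I_{n+k} = (I_n \oplus p) \oplus (0 \oplus (I_k - p)) \sim q \oplus (0\oplus(I_k-p))$ is then a proper subprojection of $I_{n+k}$ equivalent to $I_{n+k}$ — wait, I need the equivalence $I_{n+k} \sim (\text{proper subprojection})$, which follows since $q \oplus (0 \oplus (I_k-p))$ is a proper subprojection of $(I_n \oplus p) \oplus (0 \oplus (I_k - p)) = I_{n+k}$ and is equivalent to $I_n \oplus (I_k - p)$; one more application of the argument, or directly noting this equivalence class has a representative that is properly infinite, yields the contradiction. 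The details here are the place to be most careful.

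\textbf{Not stably finite $\Rightarrow$ not $\textup{IBN}_2$.} Conversely, if $A$ is not stably finite then some $M_n(A)$ contains a proper isometry $v$, so $I_n \sim v v^* \lneq I_n$ in $M_n(A)$; set $p = I_n - vv^* \neq 0$ and $X = pA^n \neq 0$. The partial isometry $v$ (together with $p$) implements $A^n \simeq (vv^*)A^n \oplus pA^n \simeq A^n \oplus X$ with $X \neq 0$, so $\textup{IBN}_2$ fails. This direction is essentially a one-line unpacking of the definition of proper isometry.

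\textbf{Main obstacle.} The conceptual content is light — everything reduces to the standard fact that stable finiteness is exactly the absence of ``$I_k$ equivalent to a proper subprojection of itself'' — so the real work is bookkeeping: showing that $X$ in $A^n \simeq A^n \oplus X$ is automatically finitely generated projective (a direct summand of a standard module, hence of the form $pA^k$), and then chasing the Murray--von Neumann equivalences in $M_{n+k}(A)$ to land on an honest proper isometry in some matrix algebra. I expect the finitely-generated-projective reduction and the final ``cut down to a proper isometry'' step (cf.\ the remark after Theorem~\ref{ibnchar}) to be the fiddliest parts; neither is deep, but both need the structure results from Section~2 invoked precisely.
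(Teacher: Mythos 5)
Your proof is correct, and your direction ``not stably finite $\Rightarrow$ not $\textup{IBN}_2$'' is word-for-word the paper's: cut $A^n$ along $vv^*$ and $I_n-vv^*$. For the converse the paper takes a shortcut worth knowing: rather than realizing $X$ as $pA^k$ and chasing Murray--von Neumann equivalences in $M_{n+k}(A)$, it composes the isometric embedding $\iota:A^n\to A^n\oplus X$ (which satisfies $\iota^*\iota=I_n$) with a unitary $U\in L(A^n\oplus X,A^n)$ to obtain $V=U\circ\iota\in L(A^n)\cong M_n(A)$ with $V^*V=I_n$ and $VV^*=U(I_n\oplus 0)U^*\neq I_n$ --- a proper isometry already sitting in $M_n(A)$, with no need for the finitely-generated-projective reduction or the lemma that a projection dominating an infinite projection is infinite. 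Your route does close: the chain $I_{n+k}=(I_n\oplus p)+(0\oplus(I_k-p))\sim(I_n\oplus 0)+(0\oplus(I_k-p))=I_n\oplus(I_k-p)$, a proper subprojection of $I_{n+k}$ since $p\neq 0$, is a valid application of orthogonal additivity of equivalence and yields a proper isometry in $M_{n+k}(A)$. But you should excise the false starts: the intermediate line ``$I_{n+k}\sim I_n\oplus I_k\succeq I_n\oplus p\succ q\sim I_n$'' mixes subequivalence and equivalence incoherently and proves nothing as written, and the aside about properly infinite representatives is unnecessary. The bookkeeping you flag as the main obstacle is exactly what the operator-composition trick eliminates.
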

\begin{proof} Suppose that $A$ is not stably finite, i.e.\ there is a proper isometry $V\in M_n(A)$ for some $n\geq 1$. Note that $I_n\sim VV^*$ and $I_n\sim I_n-VV^*\oplus VV^*\sim I_n- VV^*\oplus I_n$. Thus $A^n\simeq A^n\oplus(I-VV^*)A^n$ where $(I_n-VV^*)A^n\not=0$ as $V$ is proper. Thus $A$ does not have $\textup{IBN}_2$.

Suppose that $A$ does not have $\textup{IBN}_2$. Then $A^n\simeq A^n\oplus X$ for some $n\geq 1$ and nontrivial $A$-module $X$. Note that the embedding $\iota:A^n\to A^n\oplus X$ is an adjointable $A$-module homomorphism which is isometric in the sense that $\iota^*\iota=I_n$. Let $U\in L(A^n\oplus X,A^n)$ be a unitary, then $V=U\circ \iota:A^n\to A^n$ is an adjointable $A$-module homomorphism with $V^*V=I_n$ and $VV^*=U(I_n\oplus 0)U^*\not=I_n$. Thus $V$ corresponds to a $n\times n$ proper matrix isometry and $M_n(A)$ is not finite.
\end{proof}
Since there are $C^*$-algebras with IBN which are not stably finite (for example, the Toeplitz algebra) we conclude that $\textup{IBN}_2$ is strictly stronger than IBN.

Although we do not yet know of a better characterization for $C^*$-algebras with $\textup{IBN}_1$, we are nevertheless able to conclude that it is a distinct property from IBN.

\begin{example} Consider the $C^*$-algebra $\mathcal{T}_2$ which is the universal algebra for two isometries $v_1$ and $v_2$ satisfying $v_1^*v_2=v_2^*v_1=0$ and $v_1v_1^*+v_2v_2^*<1$. Note that $V=[v_1\ v_2]\in M_{1,2}(\mathcal{T}_2)$ is a proper matrix isometry in the sense that $V^*V=I_2$ and $VV^*<1$. Since $V$ is adjointable the submodule $V\mathcal{T}^2_2\subset \mathcal{T}_2$ is complementable (with complement $\ker V^*$) and so
$$\mathcal{T}_2=V\mathcal{T}^2_2\oplus \ker V^*\simeq \mathcal{T}^2_2\oplus \ker V^*.$$
Thus $\mathcal{T}_2$ does not have $\textup{IBN}_1$ but Cuntz \cite[Proposition 3.9]{cuntzk} has shown $K_0(\mathcal{T}_2)=\mathbb{Z}$ and is generated by $[1]_0$, hence $\mathcal{T}_2$ does have IBN.
\end{example}

Indeed, the relationship $A\simeq A^2\oplus X$ guarantees a unital $*$-homomorphism $\phi:\mathcal{T}_2^2\to A$ in much the same way the relationship $A\simeq A^2$ guarantees an embedding $\psi:\mathcal{O}_2\to A$.

\end{section}
\begin{acks} I wish to thank my Ph.D.\ advisor Dr.\ David Pitts for his constant support and insightful questions; my colleague Dr.\ Adam Fuller for our constructive conversations; Dr.\ N.\ Christopher Phillips at the University of Oregon for directing my attention to R{\o}rdam's work, suggesting Proposition \ref{morita}, and remarking that isomorphic standard modules are necessarily unitarily equivalent; and lastly the referee for his or her helpful report. The results present in this paper formed part of my doctoral dissertation while at the University of Nebraska.
\end{acks}

\bibliographystyle{amsplain}
\bibliography{bibl}

\end{document}